\newtheorem{theorem}{Theorem}[section]
\newtheorem{lemma}[theorem]{Lemma}
\newtheorem{proposition}[theorem]{Proposition}
\theoremstyle{definition}
\newtheorem{example}[theorem]{Example}
\theoremstyle{remark}
\newtheorem{remark}[theorem]{Remark}
\numberwithin{equation}{section}
\providecommand{\U}[1]{\protect\rule{.1in}{.1in}}
\pgfplotsset{compat = newest}
\newcommand{\area}{A}
\newcommand{\vol}{V}
\newcommand{\p}{\mathbf{p}}
\newcommand{\x}{\mathbf{x}}
\newcommand{\T}{\mathbf{t}}
\newcommand{\DCT}{\Delta_{\mathrm{CT}}}
\newcommand{\SCT}[2]{\mathbb{S}^{#1}_{#2}\bigl( \DCT\bigr)}
\newcommand{\DPS}{\Delta_{\mathrm{PS}}}
\newcommand{\SPS}[2]{\mathbb{S}^{#1}_{#2}\bigl( \DPS\bigr)}
\newcommand{\PCT}[4]{\begin{tikzpicture}[baseline=(current bounding box.center), scale=0.215]
\draw (-2,0)--(2,0)--(0,2*1.73205)--cycle;
\draw (-2,0)--(0,2*1.73205/3);
\draw (2,0)--(0,2*1.73205/3);
\draw (0,2*1.73205)--(0,2*1.73205/3);
\draw[fill,black!70] (-2,0) circle(1.3cm);
\node at (-2,0) {{\scriptsize {\rm {\color{white} \tiny$ #1$}}}};
\draw[fill,black!70] (2,0) circle(1.3cm);
\node at (2,0) {{\scriptsize {\rm {\color{white}\tiny$ #2$}}}};
\draw[fill,black!70] (0,2*1.73205) circle(1.3cm);
\node at (0,2*1.73205) {{\scriptsize {\rm {\color{white} \tiny$ #3$}}}};
\draw[fill,black!70] (0,2*1.73205/3) circle(.8cm);
\node at (0,2*1.73205/3) {{\scriptsize {\rm {\color{white} \tiny $ #4$}}}};
\end{tikzpicture}}
\newcommand{\PPS}[6]{\begin{tikzpicture}[baseline=(current bounding box.center), scale=0.215]
\draw (-2,0)--(2,0)--(0,2*1.73205)--cycle;
\draw (-2,0)--(1,2*1.73205/2);
\draw (2,0)--(-1,2*1.73205/2);
\draw (0,2*1.73205)--(0,0);
\draw[fill,black!70] (-2,0) circle(1.15cm);
\node at (-2,0) {{\scriptsize {\rm {\color{white}\tiny $ #1$}}}};
\draw[fill,black!70] (2,0) circle(1.15cm);
\node at (2,0) {{\scriptsize {\rm {\color{white}\tiny $#2$}}}};
\draw[fill,black!70] (0,2*1.73205) circle(1.15cm);
\node at (0,2*1.73205) {{\scriptsize {\rm {\color{white} \tiny $#3$}}}};
\draw[fill,black!70] (0,0) circle(0.75cm);
\node at (0,0) {{\scriptsize {\rm {\color{white} \tiny $#4$}}}};
\draw[fill,black!70] (1,2*1.73205/2) circle(0.75cm);
\node at (1,2*1.73205/2) {{\scriptsize {\rm {\color{white} \tiny $#5$}}}};
\draw[fill,black!70] (-1,2*1.73205/2) circle(0.75cm);
\node at (-1,2*1.73205/2) {{\scriptsize {\rm {\color{white} \tiny $#6$}}}};
\end{tikzpicture}}
\newcommand{\qd}[2]{\mathcal{Q}_{#1, #2}}
\newcommand{\Hct}[1]{\mathcal{M}_{#1}^{\mathrm{CT}}}
\newcommand{\Bct}[1]{\mathcal{B}_{#1}^{\mathrm{CT}}}
\newcommand{\Bps}[1]{\mathcal{B}_{#1}^{\mathrm{PS}}}
\begin{document}

\title[Quadrature rules for splines of high smoothness on triangles]{Quadrature rules for splines of high smoothness on uniformly refined triangles}


\author[S. Eddargani]{Salah Eddargani}
\address{Department of Mathematics, University of Rome Tor Vergata, Italy}
\email{eddargani@mat.uniroma2.it}

\author[C. Manni]{Carla Manni}
\address{Department of Mathematics, University of Rome Tor Vergata, Italy}
\email{manni@mat.uniroma2.it}

\author[H. Speleers]{Hendrik Speleers}
\address{Department of Mathematics, University of Rome Tor Vergata, Italy}
\email{speleers@mat.uniroma2.it}

\thanks{The authors are members of the research group GNCS (Gruppo Nazionale per il Calcolo Scientifico) of INdAM (Istituto Nazionale di Alta Matematica).
This work has been supported by the MUR Excellence Department Project MatMod@TOV (CUP E83C23000330006) awarded to the Department of Mathematics of the University of Rome Tor Vergata,
by a Project of Relevant National Interest (PRIN) under the National Recovery and Resilience Plan (PNRR) funded by the European Union -- Next Generation EU (CUP E53D23017910001),
by the Italian Research Center in High Performance Computing, Big Data and Quantum Computing (CUP E83C22003230001),
and by a GNCS project (CUP E53C23001670001).}

\subjclass[2020]{Primary  65D07, 65D32;  Secondary  41A15, 41A55}

\date{}


\begin{abstract}
In this paper, we identify families of quadrature rules that are exact for sufficiently smooth spline spaces on uniformly refined triangles in $\mathbb{R}^2$. Given any symmetric quadrature rule on a triangle $T$ that is exact for polynomials of a specific degree $d$, we investigate if it remains exact for sufficiently smooth splines of the same degree $d$ defined on the Clough--Tocher 3-split or the (uniform) Powell--Sabin 6-split of $T$. We show that this is always true for $C^{2r-1}$ splines having degree $d=3r$ on the former split or $d=2r$ on the latter split, for any positive integer $r$.
Our analysis is based on the representation of the considered spline spaces in terms of suitable simplex splines.
\end{abstract}
\keywords{Quadrature rules, Clough--Tocher split, Powell--Sabin split,  Simplex splines}
\maketitle

\section{Introduction}
Quadrature rules are an efficient tool for the numerical approximation of definite integrals. They are typically computed as a linear combination of function values at selected points, called quadrature nodes. Relevant examples are Gauss--Legendre quadrature rules, which are the standard choice in finite element methods \cite{H04,LGCL79}. Multivariate integration has received a considerable attention in the literature; see \cite{C03,LB77,RVNI07} and references therein.
In particular, quadrature rules that exactly integrate quadratic and cubic polynomials on $m$-simplices ($ m \geq 1 $) were provided in \cite{HM56}. Numerical integration over non-triangular regions was addressed in \cite{SN11}.

Elementwise quadrature rules facilitate numerical integration of (piecewise polynomial) functions in finite element analysis by applying a given rule (that is exact for polynomials of a certain degree) separately to each element. However, the widespread growth of isogeometric analysis \cite{IGA} raised the need for adequate quadrature rules tailored for smooth splines, i.e., piecewise polynomial functions with high smoothness. In fact, elementwise rules are far from optimal for smooth splines as they do not take full advantage of the inter-element smoothness. For instance, considering $m$-dimensional tensor-product spline spaces of degree $d$ and smoothness $\rho$,
the number of quadrature nodes needed for exact integration per mesh element is $\lceil \frac{d+1}{2} \rceil^{m}$
for the elementwise Gauss–Legendre rule, whereas an optimal rule needs asymptotically only $(\frac{d-\rho}{2})^m$ nodes \cite{HCD17}. For high-dimensional problems, such difference becomes relevant, especially for highly smooth spline spaces. This motivated the recent focus on optimal and near-optimal quadrature rules for tensor-product spline spaces \cite{BC16b,BPDC20,CST17}. We refer the reader to \cite{CLSM19} for a survey on quadrature in the context of isogeometric analysis based on tensor-product splines.
 
The tensor-product setting, however, suffers from two main drawbacks:
it fails in describing complex domain geometries and does not allow for adequate local refinement.
Splines on triangulations offer an interesting alternative to tensor-product splines, in particular within the isogeometric analysis framework. They excel in several aspects, including improved domain parameterizations and efficient local refinement strategies \cite{JQ14,HC15,HCP13}.

Quadrature rules for smooth splines on triangulations have received relatively little attention. Typically, a triangular quadrature rule is employed separately over each triangle of the triangulation, in an elementwise fashion. To our knowledge, the problem of finding ad-hoc quadrature rules for smooth splines on (special) triangulations was only addressed in the papers \cite{BK19,ELMS24,KB19}. In \cite{KB19}
the authors show that the quadrature rule with four nodes, designed in \cite{HM56} to exactly integrate the $10$-dimensional space of cubic polynomials on a triangle, remains exact on the $12$-dimensional space of $C^1$ cubic splines defined on the Clough--Tocher 3-split of the same triangle \cite{CT65}.
Thus, instead of using twelve nodes (four nodes within each micro-triangle) for exact integration according to the elementwise approach, exactness on the whole space can already be achieved with only four appropriate nodes.
In the same spirit, in \cite{BK19} the authors find that two 3-node quadrature rules, proposed in \cite{HM56} for exact integration of the $6$-dimensional space of quadratic polynomials on a triangle, remain exact on the $9$-dimensional space of $C^1$ quadratic splines defined on the (uniform) Powell--Sabin 6-split of the same triangle \cite{PS77}.
In \cite{ELMS24} a 4-node quadrature rule is presented, which exactly integrates the $12$-dimensional space of $C^1$ quadratic splines defined on the Powell--Sabin 12-split of a triangle \cite{PS77}.

Inspired by \cite{BK19,KB19}, in this paper we investigate quadrature rules beyond the elementwise approach for two classes of smooth spline spaces (parameterized by a positive integer $r$), defined on the Clough--Tocher 3-split and the (uniform) Powell--Sabin 6-split.
We show that any symmetric quadrature rule that exactly integrates polynomials of degree $3r$ also maintains exactness on the larger space of $C^{2r-1}$ splines of degree $3r$ on the Clough--Tocher 3-split; see Theorem~\ref{thm:CT-quad}. Similarly, for any symmetric quadrature rule that is exact for polynomials of degree $2r$, exactness remains for $C^{2r-1}$ splines of degree $2r$ on the (uniform) Powell--Sabin 6-split; see Theorem~\ref{thm:PS-quad}.
We note that $C^{2r-1}$ smoothness is the highest smoothness for spaces of the prescribed degrees that gives a non-trivial spline space.

Our extension of the results in \cite{BK19,KB19} is twofold. Firstly, we deal with spline spaces of arbitrarily large degree. Secondly, we prove that the ``preservation of exactness'' is not confined to a specific quadrature rule but applies to any symmetric quadrature rule that exactly integrates the polynomials contained in the considered spline spaces.
Thus, while \cite{KB19} proved the exactness of a specific 4-node quadrature rule (three nodes along the medians and one at the barycenter) for $C^1$ cubic splines on the Clough--Tocher 3-split, it actually holds that any symmetric quadrature rule remains exact for this spline space as long as it achieves exactness for cubic polynomials. For instance, a symmetric 6-node quadrature rule that exactly integrates quartic polynomials also preserves accuracy for $C^1$-cubic splines on the Clough--Tocher 3-split --- a finding not contained in \cite{KB19}. A similar extension carries over to the quadrature rule on the (uniform) Powell--Sabin 6-split considered in \cite{BK19}.

Finally, we remark that our results are obtained by using a completely different approach compared to \cite{BK19,KB19}. They are based on representing the considered spline spaces by means of an appropriate basis consisting of simplex splines \cite{M79} and exploiting the properties of such functions.

The rest of the paper is divided in four sections. Section~\ref{sec:prelim} collects some preliminary material: it presents the notion of simplex splines (the main tool for our analysis), introduces symmetric quadrature rules based on the concept of orbits, and ends with recalling the construction of the macro-element splines we focus on. Section~\ref{sec:CT-split} deals with the study of symmetric quadrature rules on the Clough--Tocher 3-split, while Section~\ref{sec:PS-split} addresses the Powell--Sabin 6-split case. Finally, Section~\ref{sec:conclusion} concludes the paper with a brief discussion on possible extensions to the trivariate setting.

\section{Preliminaries}\label{sec:prelim}
In this section, we review some material regarding simplex splines, symmetric quadrature rules based on the concept of orbits, as well as the notion of macro-element splines and the specific splits used in this paper.

\subsection{Simplex splines}
We denote by $\vol_{m}(\Omega)$ the $m$-dimensional volume of $\Omega\subset\mathbb{R}^m$, and for $m=2$, we also use the notation $\area(\Omega):=\vol_2 (\Omega)$.
For a positive integer $d$, let $\mathcal{P}:=\lbrace \p_1, \ldots , \p_{d+3} \rbrace $ be a sequence of possibly repeated points in $\mathbb{R}^2$ such that $\area( \langle\mathcal{P}\rangle ) >0$, where $ \langle \cdot \rangle$ denotes the convex hull.
Moreover, let $\sigma := \langle \widehat{\p}_1,\ldots , \widehat{\p}_{d+3} \rangle $ be a simplex in $\mathbb{R}^{d+2}$ such that $\vol_{d+2} ( \sigma) >0$ and the projection $ \pi : \mathbb{R}^{d+2} \rightarrow \mathbb{R}^{2}$ of $\sigma$ into $\mathbb{R}^{2}$ satisfies $\pi (\widehat{\p}_i)=\p_i$, $i=1,\ldots , d+3$.
The unit-integral bivariate simplex spline $S_\mathcal{P} $ is defined geometrically as
$$
S_\mathcal{P}: \mathbb{R}^2 \rightarrow \mathbb{R},\quad S_\mathcal{P} ( \mathbf{x} ) := \frac{\vol_{d} (\sigma \cap \pi^{-1} ( \mathbf{x}) )}{\vol_{d+2} ( \sigma )}.
$$
For properties of $S_\mathcal{P} $ and proofs, see \cite{M79}. Here, we mention:
\begin{itemize}
\item The simplex spline $S_\mathcal{P} $ is a non-negative piecewise polynomial of total degree $d$ and support $\langle \mathcal{P} \rangle$.
\item For $d=0$ we have
$$
S_\mathcal{P} ( \mathbf{x} )= \begin{cases}
\frac{1}{ \area( \langle \mathcal{P} \rangle )}, & \text{if $\mathbf{x} \in$ interior of $\langle \mathcal{P} \rangle$}, \\
0, & \text{if $\mathbf{x} \notin  \langle \mathcal{P} \rangle$},
\end{cases}
$$
and the value of $S_\mathcal{P}$ on the boundary of $ \langle \mathcal{P} \rangle $ has to be dealt with separately.
\item $S_\mathcal{P}$ is $C^{d+1-\mu}$ continuous across a knot line, where $\mu$ is the number of knots including multiplicity on that knot line (knot lines are the lines in the complete graph of $\mathcal{P}$). 
\end{itemize}
In this paper, we consider the normalized bivariate simplex spline $N_\mathcal{P} $ defined as
\begin{equation}
\label{eq:normalized-simplex}
N_\mathcal{P} := \frac{\area(T)}{\binom{d+2}{2}} S_\mathcal{P},
\end{equation}
where $T$ is a non-degenerate triangle in $\mathbb{R}^2$.
Then, for $d=0$ we have
$$
 N_\mathcal{P} ( \mathbf{x} )= \begin{cases}
 \frac{\area(T) }{ \area( \langle \mathcal{P} \rangle )}, & \text{if $\mathbf{x} \in$ interior of $\langle \mathcal{P} \rangle$}, \\
 0, & \text{if $\mathbf{x} \notin  \langle \mathcal{P} \rangle$}.
 \end{cases}
$$
Moreover, $N_\mathcal{P}$ enjoys the following properties.
\begin{itemize}
\item \textbf{Derivative formula.} For any $\mathbf{u} \in \mathbb{R}^2$, and any $\alpha_1,\ldots, \alpha_{d+3} \, \in \mathbb{R}$ such that $\mathbf{u} = \sum_{i=1}^{d+3} \alpha_i\, \p_i$, $\sum_{i=1}^{d+3} \alpha_i = 0$,
$$
\partial_{\mathbf{u}} N_\mathcal{P} = d \sum_{i=1}^{d+3} \alpha_i N_{  \mathcal{P} \backslash \p_i }.
$$
\item \textbf{Recurrence relation.} For any $\mathbf{x} \in \mathbb{R}^2$, and any $\beta_1,\ldots, \beta_{d+3} \, \in \mathbb{R}$ such that $\mathbf{x}=\sum_{i=1}^{d+3} \beta_i \, \p_i$, $\sum_{i=1}^{d+3} \beta_i = 1$,
$$
N_\mathcal{P} ( \mathbf{x}) = \sum_{i=1}^{d+3} \beta_i N_{ \mathcal{P} \backslash \p_i }( \mathbf{x}).
$$
\item \textbf{Knot insertion.} For any $\mathbf{y} \in \mathbb{R}^2$, and any $\gamma_1,\ldots, \gamma_{d+3} \, \in \mathbb{R}$ such that $\mathbf{y} = \sum_{i=1}^{d+3} \gamma_i\, \p_i$, $\sum_{i=1}^{d+3} \gamma_i = 1$,
$$
N_\mathcal{P} = \sum_{i=1}^{d+3} \gamma_i N_{ \mathcal{P} \bigcup \mathbf{y} \backslash \p_i }.
$$
\end{itemize}

\subsection{Symmetric quadrature rules for triangles}
Quadrature rules are an efficient tool for the numerical approximation of integrals. Here we are interested in integrals of the form
$$
\int_T f(\x) {\rm{d}}\x,
$$
where $f$ is a given function on the triangle $T$.
Let $n\in \mathbb{N},$ $n>0$ and a function space $\mathbb{S}$ be given. We denote by
$$
\qd{n}{\mathbb{S}} (f) := \area (T) \sum_{i=1}^n \omega_i f(\T_i )
$$
an $n$-node quadrature rule that is exact for any function in the space $\mathbb{S}$, i.e., such that
$$
\qd{n}{\mathbb{S}} (f) = \int_T f(\x) {\rm{d}}\x\ \text{ for all } f \in \mathbb{S}.
$$
The points $\T_i\in T$, $i=1,\dots, n$, are the nodes of the quadrature rule and $\omega_i$ are the corresponding weights.
Usually, the space $\mathbb{S}$ is chosen as the space of polynomials of a given degree.

A quadrature rule $\qd{n}{\mathbb{S}}$ is said to be symmetric if it maintains its properties under rotation around the barycenter and reflection with respect to the medians of the triangle.
More precisely, if a point defined by its barycentric coordinates $( \alpha_1, \alpha_2, \alpha_3 )$ with respect to $T$ is a node of $\qd{n}{\mathbb{S}}$, then all points resulting from all possible permutations of these barycentric coordinates are also nodes of $\qd{n}{\mathbb{S}}$, with the same weight (see \cite{FJZC20,P15,WX03}). Recall that barycentric coordinates satisfy $\alpha_1+ \alpha_2+ \alpha_3=1$.

The concept of symmetric quadrature rules gives rise to three types of nodes, depending on the number of possible combinations obtained by permuting the three values in $( \alpha_1, \alpha_2, \alpha_3 )$. The first case is when all three coordinates are equal, which means $\alpha_1 = \alpha_2 = \alpha_3 = \frac{1}{3}$. The second case is when two coordinates are equal, i.e., of the form $( \theta, \theta, 1-2\theta )$, $\theta\neq\frac{1}{3}$. In the third case, all the coordinates are different, i.e., $( \theta, \eta, 1-\theta-\eta )$, $\theta\neq\eta\neq1-\theta-\eta$. These three cases are referred to as type-$j$ orbit, with $j = 0, 1, 2$, respectively; see \cite{FJZC20} and references therein. This terminology arises from the fact that, in each case, the nodes are situated in the same orbit. Specifically, the type-0 orbit comprises a single point located at the barycenter. A type-1 orbit consists of three points, each positioned on a median, with their coordinates represented by the three unique permutations of $( \theta, \theta, 1-2\theta )$. A type-2 orbit encompasses six points, located outside the medians, with their coordinates forming the six unique permutations of $( \theta, \eta, 1 - \theta - \eta )$.
Figure~\ref{fig:orbits} illustrates examples of these three orbit types.

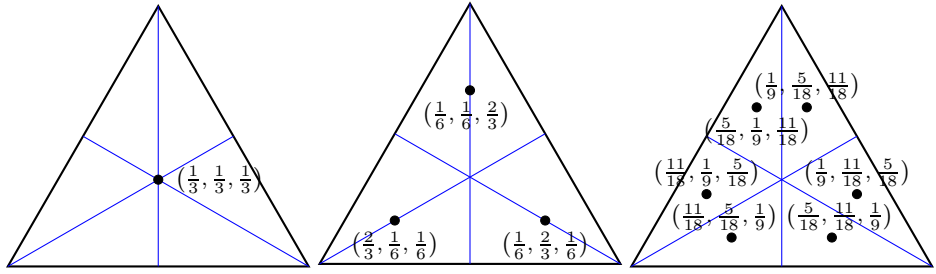
\begin{figure}[t!]
\centering
\begin{tikzpicture}[scale=2]
 \coordinate (A) at (-1,0); \coordinate (B) at (1,0); \coordinate (C) at (0,1.73205);
 \coordinate (R1) at ($(B)!0.5!(A)$); \coordinate (R2) at ($(B)!0.5!(C)$); \coordinate (R3) at ($(C)!0.5!(A)$);
 \draw[thick] (A) -- (B) -- (C) -- cycle; \draw[blue] (C) -- (R1); \draw[blue] (A) -- (R2); \draw[blue] (B) -- (R3);
  \pgfmathsetmacro{\a}{1/3}
  \pgfmathsetmacro{\aa}{(1-\a)/2}
 \coordinate (N1) at ($\a*(A)+ \aa*(B)+\aa*(C)$);
 \draw[fill] (N1) circle(0.03cm);
 \node[right=1mm] at (N1) {{\footnotesize $\left( \frac{1}{3}, \frac{1}{3}, \frac{1}{3}\right)$}};
\end{tikzpicture}
\begin{tikzpicture}[scale=2]
 \coordinate (A) at (-1,0); \coordinate (B) at (1,0); \coordinate (C) at (0,1.73205);
 \coordinate (R1) at ($(B)!0.5!(A)$); \coordinate (R2) at ($(B)!0.5!(C)$); \coordinate (R3) at ($(C)!0.5!(A)$);
 \draw[thick] (A) -- (B) -- (C) -- cycle; \draw[blue] (C) -- (R1); \draw[blue] (A) -- (R2); \draw[blue] (B) -- (R3);
  \pgfmathsetmacro{\a}{2/3}
  \pgfmathsetmacro{\aa}{(1-\a)/2}
 \coordinate (N1) at ($\a*(A)+ \aa*(B)+\aa*(C)$);
 \draw[fill] (N1) circle(0.03cm);
 \node[below=0.2mm] at (N1) {{\footnotesize $\left( \frac{2}{3}, \frac{1}{6}, \frac{1}{6}\right)$}};
  \coordinate (N2) at ($\a*(B)+ \aa*(C)+\aa*(A)$);
 \draw[fill] (N2) circle(0.03cm);
 \node[below=0.2mm] at (N2) {{\footnotesize $\left( \frac{1}{6}, \frac{2}{3}, \frac{1}{6}\right)$}};
 \coordinate (N3) at ($\a*(C)+ \aa*(A)+\aa*(B)$);
 \draw[fill] (N3) circle(0.03cm);
 \node[below=0.2mm] at (N3) {{\footnotesize $\left( \frac{1}{6}, \frac{1}{6}, \frac{2}{3}\right)\ $}};
\end{tikzpicture}
\begin{tikzpicture}[scale=2]
 \coordinate (A) at (-1,0); \coordinate (B) at (1,0); \coordinate (C) at (0,1.73205);
 \coordinate (R1) at ($(B)!0.5!(A)$); \coordinate (R2) at ($(B)!0.5!(C)$); \coordinate (R3) at ($(C)!0.5!(A)$);
 \draw[thick] (A) -- (B) -- (C) -- cycle; \draw[blue] (C) -- (R1); \draw[blue] (A) -- (R2); \draw[blue] (B) -- (R3);
  \pgfmathsetmacro{\a}{11/18}
  \pgfmathsetmacro{\b}{5/18}
   \pgfmathsetmacro{\c}{1/9}
 \coordinate (N1) at ($\a*(A)+ \b*(B)+\c*(C)$);
 \draw[fill] (N1) circle(0.03cm);
 \node[above=-0.3mm] at (N1) {{\footnotesize $\left( \frac{11}{18}, \frac{5}{18}, \frac{1}{9}\right)\ \ $}};
  \coordinate (N2) at at ($\a*(A)+ \c*(B)+\b*(C)$);
 \draw[fill] (N2) circle(0.03cm);
 \node[above=-0.2mm] at (N2) {{\footnotesize $\left( \frac{11}{18}, \frac{1}{9}, \frac{5}{18}\right)$}};
 \coordinate (N3) at ($\b*(A)+ \a*(B)+\c*(C)$);
 \draw[fill] (N3) circle(0.03cm);
 \node[above=-0.2mm] at (N3) {{\footnotesize $\ \ \left( \frac{5}{18}, \frac{11}{18}, \frac{1}{9}\right)$}};
  \coordinate (N4) at ($\b*(A)+ \c*(B)+\a*(C)$);
 \draw[fill] (N4) circle(0.03cm);
 \node[below=0.1mm] at (N4) {{\footnotesize $\left( \frac{5}{18}, \frac{1}{9}, \frac{11}{18}\right)$}};
  \coordinate (N5) at ($\c*(A)+ \a*(B)+\b*(C)$);
 \draw[fill] (N5) circle(0.03cm);
 \node[above=-0.2mm] at (N5) {{\footnotesize $\left( \frac{1}{9}, \frac{11}{18}, \frac{5}{18}\right)$}};
  \coordinate (N6) at ($\c*(A)+ \b*(B)+\a*(C)$);
 \draw[fill] (N6) circle(0.03cm);
 \node[above=-0.2mm] at (N6) {{\footnotesize $\left( \frac{1}{9}, \frac{5}{18}, \frac{11}{18}\right)$}};
\end{tikzpicture}
\caption{Schematic representation of type-0 (left), type-1 (center), and type-2 (right) orbits.}\label{fig:orbits}
\end{figure}

A symmetric quadrature rule that uses $n_0$ type-0 orbits $(n_0 \leq 1)$, $n_1$ type-1 orbits, and $n_2$ type-2 orbits is called a rule of type $[ n_0, n_1, n_2 ]$ and is denoted by $\qd{[ n_0, n_1, n_2 ]}{\mathbb{S}}$. The number of nodes for such a rule is
$$
n = n_0 + 3n_1 + 6n_2.
$$
Thus, the symmetric rule $\qd{[ n_0, n_1, n_2 ]}{\mathbb{S}}$ can be written as
\begin{align*}
\frac{\qd{[ n_0, n_1, n_2 ]}{\mathbb{S}} (f)}{\area (T)} &= \omega_0 f\biggl( \T\left( \frac{1}{3}, \frac{1}{3}, \frac{1}{3}\right)\biggr)
+ \sum_{i=1}^{n_1} \omega_{i,1} \sum_{\bm{\alpha} \in \Pi\lbrace \theta_{i,1}, \theta_{i,1},1-2 \theta_{i,1}\rbrace } f( \T(\bm{\alpha}) ) \\
&\quad\
+ \sum_{i=1}^{n_2} \omega_{i,2} \sum_{\bm{\alpha} \in \Pi\lbrace \theta_{i,2}, \eta_{i,2}, 1-\theta_{i,2} -\eta_{i,2} \rbrace } f( \T(\bm{\alpha}) ),
\end{align*}
for some
given
$
(\theta_{i,1},\theta_{i,1},1-2\theta_{i,1} )$, $ i=1,\ldots,n_1$, and $
(\theta_{i,2},\eta_{i,2}, 1-\theta_{i,2}-\eta_{i,2} )$, $ i=1,\dots,n_2$.
Here $\T (\bm{\alpha})$ denotes the point having the barycentric coordinates $ \bm{\alpha} := ( \alpha_1, \alpha_2, \alpha_3)$ and $\Pi \lbrace \beta_1, \beta_2, \beta_3\rbrace $ stands for all possible permutations of the triple $(\beta_1, \beta_2, \beta_3)$.
Figure \ref{fig:node-conf} shows examples of node configurations of symmetric quadrature rules that are exact for polynomials up to order $7$; see \cite{C72}.

\begin{figure}[t!]
\centering
\begin{tikzpicture}[scale=1.5]
\draw[thick] (-1,0)--(1,0)--(0,1.73205)--cycle;
\draw[blue] (-1,0)--(1/2,1.73205/2);
\draw[blue] (1,0)--(-1/2,1.73205/2);
\draw[blue] (0,0)--(0,1.73205);
\foreach \point  in {(-0.5, 0.288675), (0.5, 0.288675), (0., 1.1547)} {
 \draw[fill] \point  circle(0.03cm); }
\end{tikzpicture}
\quad
\begin{tikzpicture}[scale=1.5]
\draw[thick] (-1,0)--(1,0)--(0,1.73205)--cycle;
\draw[blue] (-1,0)--(1/2,1.73205/2);
\draw[blue] (1,0)--(-1/2,1.73205/2);
\draw[blue] (0,0)--(0,1.73205);
\foreach \point  in {(-0.4, 0.34641), (0.4, 0.34641), (0., 1.03923), (0., 0.57735)} {
 \draw[fill] \point  circle(0.03cm); }
\end{tikzpicture}
\quad
\begin{tikzpicture}[scale=1.5]
\draw[thick] (-1,0)--(1,0)--(0,1.73205)--cycle;
\draw[blue] (-1,0)--(1/2,1.73205/2);
\draw[blue] (1,0)--(-1/2,1.73205/2);
\draw[blue] (0,0)--(0,1.73205);
\foreach \point  in {(-0.725271, 0.158615), (0.725271, 0.158615), (0., 1.41482), (0.337845, 0.772405),  (-0.337845, 0.772405), (0., 0.18724)} {
 \draw[fill] \point  circle(0.03cm); }
\end{tikzpicture}
\\[0.5cm]
\begin{tikzpicture}[scale=1.5]
\draw[thick] (-1,0)--(1,0)--(0,1.73205)--cycle;
\draw[blue] (-1,0)--(1/2,1.73205/2);
\draw[blue] (1,0)--(-1/2,1.73205/2);
\draw[blue] (0,0)--(0,1.73205);
\foreach \point  in {(0., 0.57735),(-0.69614, 0.175433), (0.69614, 0.175433), (0., 1.38118), (0.410426, 0.81431), (-0.410426, 0.81431), (0., 0.103431)} {
 \draw[fill] \point  circle(0.03cm); }
\end{tikzpicture}
\quad
\begin{tikzpicture}[scale=1.5]
\draw[thick] (-1,0)--(1,0)--(0,1.73205)--cycle;
\draw[blue] (-1,0)--(1/2,1.73205/2);
\draw[blue] (1,0)--(-1/2,1.73205/2);
\draw[blue] (0,0)--(0,1.73205);
    \foreach \point  in {(-0.810733, 0.109273), (0.810733, 0.109273), (0., 1.5135), (-0.25214, 0.431777),  (0.25214, 0.431777), (0., 0.868496), (-0.32615, 0.0920499), (-0.583357, 0.537546), (0.32615, 0.0920499), (0.583357, 0.537546), (0.257207, 1.10245), (-0.257207, 1.10245)} {
        \draw[fill] \point  circle(0.03cm); }
\end{tikzpicture}
\quad
\begin{tikzpicture}[scale=1.5]
\draw[thick] (-1,0)--(1,0)--(0,1.73205)--cycle;
\draw[blue] (-1,0)--(1/2,1.73205/2);
\draw[blue] (1,0)--(-1/2,1.73205/2);
\draw[blue] (0,0)--(0,1.73205);
\foreach \point  in {(0., 0.57735),(-0.69614, 0.175433), (0.69614, 0.175433), (0., 1.38118), (-0.80461, 0.112809), (0.80461, 0.112809), (0., 1.50643), (-0.325579, 0.0843341), (-0.589754, 0.541899),  (0.325579, 0.0843341), (0.589754, 0.541899), (0.264175, 1.10582), (-0.264175, 1.10582)} {
 \draw[fill] \point  circle(0.03cm); }
\end{tikzpicture}
\caption{Node configurations of symmetric quadrature rules exact for  polynomials of degree $2$, $3$, and $4$ (top row, from left to right) and degree $5$, $6$, and $7$ (bottom row, from left to right).}\label{fig:node-conf}
\end{figure}
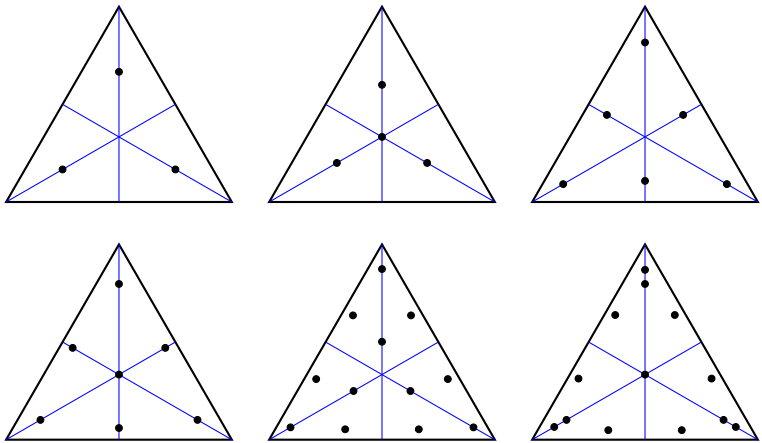

\subsection{Macro-element splines}
The spline space of global smoothness $\rho$ and degree $d$ on a triangulation $\Delta$ is defined by
$$
\mathbb{S}^\rho_d (\Delta) := \bigl\lbrace s \in C^{\rho} (\Delta): \, s_{\mid \tau}  \in \mathbb{P}_d  \text{ for all } \tau \in \Delta \bigr\rbrace ,
$$
where $\tau$ is any triangle in $\Delta$ and  $\mathbb{P}_d$ stands for the  $\binom{d+2}{2} $ dimensional linear space of bivariate polynomials of degree less than or equal to $d$.
Spline spaces with high smoothness compared to the degree are often preferred in applications but their use may be problematic because they may fail to have a stable dimension and may not allow for a local construction of the elements of the space; see \cite{LS07,MS24} and references therein for a detailed discussion.
These difficulties can be circumvented by using a proper refinement (or split) of each triangle of the given triangulation. This is called the macro-element  approach. Popular splits are the Clough--Tocher 3-split and the Powell--Sabin 6-split. In this paper, we focus on these splits in their symmetric (uniform) formulation as described in the following.

Let $T$ be a non-degenerate triangle with vertices $\p_1$, $\p_2$, and $\p_3$. If we connect the vertices of $T$ with the barycenter $\p_c := \frac{1}{3} \sum_{i=1}^3 \p_i $, we get three micro-triangles $\langle \p_1, \p_2, \p_c \rangle $, $\langle \p_2, \p_3, \p_c \rangle $, and $\langle \p_3, \p_1, \p_c \rangle $. The resulting triangulation of $T$ is known as the Clough--Tocher 3-split \cite{CT65} and denoted by $\DCT$; see Figure~\ref{fig:CTandPS} (left).
Furthermore, let $\p_{1,2}$, $\p_{2,3}$, and $\p_{3,1}$ be the midpoints of the edges $\langle \p_1, \p_2 \rangle $, $\langle \p_2, \p_3 \rangle $, and $\langle \p_3, \p_1 \rangle $, respectively. If we connect each midpoint by its opposite vertex of $T$, we get the six micro-triangles $\langle \p_1, \p_{1,2}, \p_c \rangle $, $\langle \p_{1,2}, \p_2, \p_c \rangle $, $\langle \p_2, \p_{2,3}, \p_c \rangle $, $\langle  \p_{2,3}, \p_3, \p_c \rangle $, $\langle \p_3, \p_{3,1}, \p_c \rangle $, and $\langle \p_{3,1}, \p_1,  \p_c \rangle $. The resulting triangulation of $T$ is the uniform case of the well-known Powell--Sabin 6-split \cite{PS77} and denoted by $\DPS$; see Figure~\ref{fig:CTandPS} (right).

\begin{figure}[t!]
\centering
\begin{tikzpicture}[scale=2.5]
\draw[thick] (-1,0)--(1,0)--(0,1.73205)--cycle;
\draw[dashed] (-1,0)--(0,1.73205/3);
\draw[dashed] (1,0)--(0,1.73205/3);
\draw[dashed] (0,1.73205)--(0,1.73205/3);
\draw[fill] (-1,0) circle(0.03cm);
\node at (-1-0.1,0-0.12) {$\p_1$};

\draw[fill] (1,0) circle(0.03cm);
\node at (1+0.1,0-0.12) {$\p_2$};

\draw[fill] (0,1.73205) circle(0.03cm);
\node at (0,1.73205+0.12) {$\p_3$};

\draw[fill] (0,1.73205/3) circle(0.03cm);
\node at (0,1.73205/3-0.15) {$\p_c$};
\end{tikzpicture}
\hspace*{0.2cm}
\begin{tikzpicture}[scale=2.5]
\draw[thick] (-1,0)--(1,0)--(0,1.73205)--cycle;
\draw[dashed] (-1,0)--(1/2,1.73205/2);
\draw[dashed] (1,0)--(-1/2,1.73205/2);
\draw[dashed] (0,0)--(0,1.73205);
\draw[fill] (-1,0) circle(0.03cm);
\node at (-1-0.1,0-0.12) {$\p_1$};

\draw[fill] (1,0) circle(0.03cm);
\node at (1+0.1,0-0.12) {$\p_2$};

\draw[fill] (0,1.73205) circle(0.03cm);
\node at (0,1.73205+0.12) {$\p_3$};

\draw[fill] (-0.5,1.73205/2) circle(0.03cm);
\node at (-0.5-0.18,1.73205/2) {$\p_{3,1}$};

\draw[fill] (0.5,1.73205/2) circle(0.03cm);
\node at (0.5+0.2,1.73205/2) {$\p_{2,3}$};

\draw[fill] (0,0) circle(0.03cm);
\node at (0,0-0.14) {$\p_{1,2}$};

\draw[fill] (0,1.73205/3) circle(0.03cm);
\node at (-0.08,1.73205/3-0.16) {$\p_c$};
\end{tikzpicture}
\caption{The symmetric splits $\DCT$ (left) and $\DPS$ (right) of $T := \langle\p_1, \p_2, \p_3 \rangle$.}\label{fig:CTandPS}
\end{figure}

Our goal is to identify spline spaces of degree $d$ on either $\DCT$ or $\DPS$ that are exactly integrated by any given symmetric quadrature rule designed for the polynomial space $\mathbb{P}_d$, i.e., of type $\qd{[ n_0, n_1, n_2 ]}{\mathbb{P}_d}$. In other words, if we have a symmetric quadrature rule that exactly integrates $\mathbb{P}_d$, it will also exactly integrate the identified spline spaces of degree $d$ on either $\DCT$ or $\DPS$.
We begin this investigation by considering spline spaces defined on $\DCT$ (Section~\ref{sec:CT-split}). Afterwards, we address spline spaces defined on $\DPS$ (Section~\ref{sec:PS-split}).

\section{Clough--Tocher split}\label{sec:CT-split}
Let $\SCT{\rho}{d}$ be the spline space of degree $d$ and smoothness $\rho$ defined on the triangulation $\DCT$ of a given triangle $T$. The proposition below provides the dimension of this space.
\begin{proposition}
For any $0\leq\rho<d$, we have
\begin{equation}\label{eq:CT-dim-full}
\dim \, \SCT{\rho}{d} = \binom{\rho+2}{2} + 3 \binom{d-\rho+1}{2} + \sum_{j=1}^{d-\rho} \max \{ \rho+1-2j,0 \}.
\end{equation}
\end{proposition}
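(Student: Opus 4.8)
The plan is to reduce the dimension count to a statement about an ideal generated by powers of three linear forms, and then to compute its Hilbert function degree by degree. The triangulation $\DCT$ has a single interior vertex $\p_c$ of valence three, with interior edges $\langle\p_c,\p_1\rangle$, $\langle\p_c,\p_2\rangle$, $\langle\p_c,\p_3\rangle$ and no other interior edges, so global $C^\rho$ smoothness of a spline is equivalent to $C^\rho$ smoothness across each of these three edges. Writing $p_i\in\mathbb{P}_d$ for the polynomial piece on the micro-triangle opposite $\p_i$, and $\ell_i$ for a nonzero linear form vanishing on the line through $\p_c$ and $\p_i$, I would use the standard algebraic characterization of cross-edge smoothness for polynomials: two pieces are $C^\rho$ across the edge lying on $\{\ell=0\}$ exactly when their difference is divisible by $\ell^{\rho+1}$. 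Thus a spline in $\SCT{\rho}{d}$ is precisely a triple $(p_1,p_2,p_3)$ whose three cyclic differences are divisible by the appropriate $\ell_i^{\rho+1}$.

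First I would make this quantitative. Encoding the differences as $\ell_i^{\rho+1}a_i$ with $a_i\in\mathbb{P}_{d-\rho-1}$ and using that the three cyclic differences telescope to zero, one sees that the pair $(p_1,(a_1,a_2,a_3))$ determines the whole spline, subject only to the closure relation $\sum_{i}\ell_i^{\rho+1}a_i=0$. This gives a linear isomorphism $\SCT{\rho}{d}\cong\mathbb{P}_d\times V$, where
$$
V:=\bigl\{(a_1,a_2,a_3)\in\mathbb{P}_{d-\rho-1}^3:\ \textstyle\sum_{i=1}^3 \ell_i^{\rho+1}a_i=0\bigr\}=\ker\Phi,\qquad \Phi(a_1,a_2,a_3):=\sum_{i=1}^3\ell_i^{\rho+1}a_i .
$$
By rank--nullity, and since $\dim\mathbb{P}_{d-\rho-1}=\binom{d-\rho+1}{2}$, this yields $\dim\SCT{\rho}{d}=\binom{d+2}{2}+3\binom{d-\rho+1}{2}-\dim(\operatorname{Im}\Phi)$, with $\operatorname{Im}\Phi$ equal to the degree-$\le d$ part $I_{\le d}$ of the ideal $I=(\ell_1^{\rho+1},\ell_2^{\rho+1},\ell_3^{\rho+1})$. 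It therefore remains to prove $\dim(\mathbb{P}_d/I_{\le d})=\binom{\rho+2}{2}+\sum_{j=1}^{d-\rho}\max\{\rho+1-2j,0\}$.

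Taking $\p_c$ as the origin makes the $\ell_i$ homogeneous, so $I$ is graded and I would compute its Hilbert function in each degree $k$. For $k\le\rho$ the ideal is trivial, contributing $\sum_{k=0}^{\rho}(k+1)=\binom{\rho+2}{2}$, which is exactly the first term. For $k=\rho+j$ with $j\ge1$, the graded piece $I_k$ is the image of the multiplication map $\Phi_k\colon H_{j-1}^3\to H_{\rho+j}$ on spaces of bivariate homogeneous polynomials, of dimensions $3j$ and $\rho+j+1$ respectively. If each $\Phi_k$ has maximal rank $\min\{3j,\rho+j+1\}$, then $\dim(H_{\rho+j}/I_{\rho+j})=\max\{\rho+1-2j,0\}$, and summing over $j=1,\dots,d-\rho$ produces the last term.

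The crux is establishing that every multiplication map $\Phi_k$ has maximal rank. Here I would exploit that $\mathrm{PGL}_2$ acts sharply $3$-transitively on the pencil of lines through $\p_c$: because the three edge directions $\p_c\p_1,\p_c\p_2,\p_c\p_3$ are pairwise distinct, a linear change of coordinates reduces the problem to the single explicit configuration $\ell_1=x$, $\ell_2=y$, $\ell_3=x+y$. For this fixed configuration the maximal-rank property of $x^{\rho+1},y^{\rho+1},(x+y)^{\rho+1}$ can be checked directly—injectivity of $\Phi_k$ (absence of low-degree syzygies among the three powers) for small $j$, and surjectivity for large $j$—or invoked from the classical theory of ideals generated by powers of linear forms in two variables. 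I expect this maximal-rank verification to be the only genuinely technical point; the reduction to three polynomial pieces and the rank--nullity bookkeeping are routine, and the $\max\{\cdot,0\}$ truncation then accounts automatically for the transition regime where the generators begin to fill the whole degree.
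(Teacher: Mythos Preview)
Your argument is correct and follows the Billera--Rose homological approach: reduce $\SCT{\rho}{d}$ to $\mathbb{P}_d$ plus the syzygy module of $(\ell_1^{\rho+1},\ell_2^{\rho+1},\ell_3^{\rho+1})$, then compute the Hilbert function of the ideal. The paper does none of this; it simply specializes Theorem~9.3 of Lai and Schumaker, \emph{Spline Functions on Triangulations}, to the Clough--Tocher configuration ($n=3$ faces, interior-vertex parameter $m_v=3$), so your proof is far more involved but also essentially self-contained. Your normalization to $x,\,y,\,x+y$ is legitimate because the three medians of a non-degenerate triangle are distinct lines through the centroid. The one step you leave open---maximal rank of each $\Phi_k$---is not quite an ``elementary check'' for general $\rho$: it is equivalent to the Strong Lefschetz Property of $k[x,y]/(x^{\rho+1},y^{\rho+1})$ with Lefschetz element $x+y$, which in characteristic zero is classical (Stanley via hard Lefschetz, Anick's proof of Fr\"oberg's conjecture in two variables, or a direct binomial-determinant computation). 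If you write this up, cite one of these or give the determinant argument explicitly; with that in place, your route actually explains \emph{why} the correction term $\sum_{j}\max\{\rho+1-2j,0\}$ appears, whereas the paper's one-line citation treats the formula as a black box.
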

\begin{proof}
This follows immediately from \cite[Theorem~9.3]{LS07} by taking $n=3$ and $m_v=3$ in that theorem.
\end{proof}

As a natural generalization of the famous $C^1$ cubic Clough--Tocher macro-element \cite{CT65}, we consider piecewise polynomials of degree $d=3r$ on $\DCT$, for any integer $r\geq1$.
Formula \eqref{eq:CT-dim-full} shows that $\dim \, \SCT{2r}{3r} =\dim \, \mathbb{P}_{3r}$. This means that $\SCT{2r}{3r}$ only comprises polynomials of degree $3r$.
On the other hand, we have 
\begin{equation}\label{eq:CT-dim}
\dim \, \SCT{2r-1}{3r} =\dim \, \mathbb{P}_{3r} + 2.
\end{equation}
Therefore, the highest attainable smoothness for a non-trivial spline space of degree $d=3r$ on  $\DCT$ is $\rho=2r-1$.
In particular, we have
$$
\dim \, \SCT{1}{3} = 10 + 2,\quad  \dim \, \SCT{3}{6} = 28 + 2,\quad  \dim \, \SCT{5}{9} = 55 + 2.
$$
In the following, we verify whether symmetric quadrature rules that are exact on $\mathbb{P}_{3r}$ remain exact on $\SCT{2r-1}{3r}$.

As a first step, we construct a symmetric normalized simplex spline basis for $ \SCT{2r-1}{3r}$. In this perspective, we consider $\x \in T$, and
for integers $i, j, k, \ell$ we use the graphical notation
\begin{equation}\label{eq:CT-graphic}
\PCT{i}{j}{k}{\ell}
\end{equation}
to represent the normalized simplex spline $ N_\mathcal{P}$ in \eqref{eq:normalized-simplex} specified by the sequence of points
$$\mathcal{P}= \lbrace \p_1[i], \p_2[j], \p_3[k], \p_c[\ell] \rbrace, $$
where $\p[i]$ means that $\p$ has multiplicity $i$, i.e., it is repeated $i$ times. Some examples are visualized in Figure~\ref{fig:CT-splines}. Note that, when $\ell = 0$ and $i,j,k\geq 1$, $i+j+k = 3r+3$, the corresponding $N_\mathcal{P}$ is a Bernstein polynomial of degree $3r$ related to $T$. More precisely, for $i,j,k\geq 1$, $i+j+k = 3r+3$, we have
\begin{equation}\label{eq:Bernstein}
\PCT{i}{j}{k}{0} = \frac{(i+j+k-3)!}{(i-1)! (j-1)! (k-1)!} \alpha_1^{i-1} \alpha_2^{j-1} \alpha_3^{k-1},
\end{equation}
where $(\alpha_1, \alpha_2, \alpha_3)$ stand for the barycentric coordinates with respect to $T$.

\begin{figure}[t!]
\centering
{\includegraphics[scale=0.45]{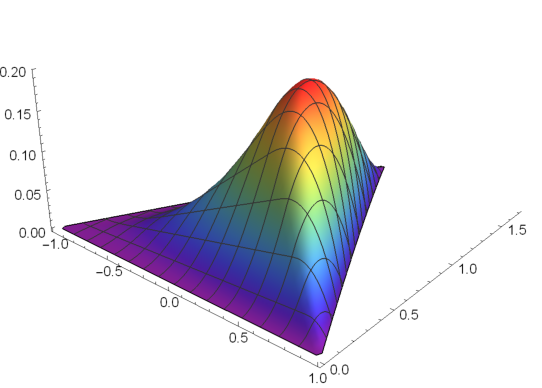}}
{\includegraphics[scale=0.45]{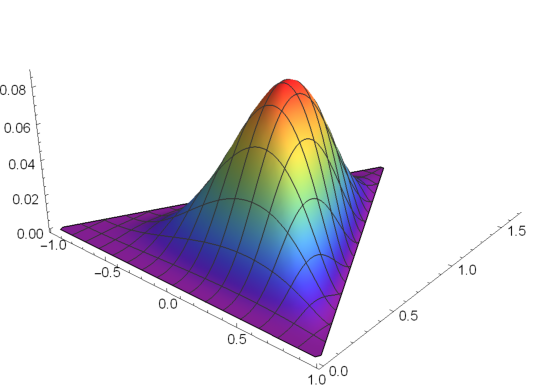}}
{\includegraphics[scale=0.45]{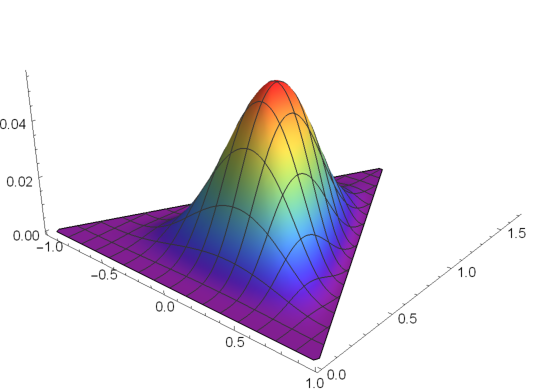}}
\\[0.2cm]
$\put(-135,20){ \PCT{1}{2}{2}{1}} \put(-10,20){ \PCT{2}{3}{3}{1}} \put(110,20){ \PCT{3}{4}{4}{1}}$
\caption{Plots of simplex splines as in \eqref{eq:CT-graphic} of degree $3$ (left), $6$ (center), and $9$ (right).}\label{fig:CT-splines}
\end{figure}
Inspired by the construction in \cite{LM18} for $r=1$ and looking at the dimension formula \eqref{eq:CT-dim}, to obtain a symmetric simplex spline basis for the space $\SCT{2r-1}{3r}$, one could replace the central Bernstein polynomial $ \PCT{r+1}{r+1}{r+1}{0}$ by three symmetric splines in $\SCT{2r-1}{3r}$. The knot insertion formula for inserting a knot at the barycenter gives
\begin{equation}\label{eq:Bern-central}
 \PCT{r+1}{r+1}{r+1}{0} =\frac{1}{3} \left( \PCT{r}{r+1}{r+1}{1} + \PCT{r+1}{r}{r+1}{1} +  \PCT{r+1}{r+1}{r}{1} \right) .
\end{equation}

The next lemma is useful to show the linear independence of the simplex splines we are dealing with.
\begin{lemma}\label{lem:CT-jump}
For $c_1,c_2,c_3\in\mathbb{R}$, let
\begin{equation}\label{eq:CT-polynomial}
p = c_1 \PCT{r}{r+1}{r+1}{1} + c_2 \PCT{r+1}{r}{r+1}{1} + c_3 \PCT{r+1}{r+1}{r}{1}.
\end{equation}
Then, $p \in \mathbb{P}_{3r}$ if and only if
$ c_1 = c_2 = c_3$.
\end{lemma}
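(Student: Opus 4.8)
The plan is to prove the two implications separately; the forward one is immediate, and the converse rests on a jump analysis across the three medians $\langle\p_1,\p_c\rangle$, $\langle\p_2,\p_c\rangle$, $\langle\p_3,\p_c\rangle$ of $\DCT$. For the forward direction, if $c_1=c_2=c_3=:c$ then the knot insertion identity \eqref{eq:Bern-central} gives $p=3c\,\PCT{r+1}{r+1}{r+1}{0}$, which by \eqref{eq:Bernstein} is a constant multiple of a degree-$3r$ Bernstein polynomial on $T$; hence $p\in\mathbb{P}_{3r}$.

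For the converse, write $N_1,N_2,N_3$ for the three simplex splines appearing in \eqref{eq:CT-polynomial}, in that order, and fix the transverse direction $\mathbf u:=\p_2-\p_3$. Each $N_i$ is a piecewise polynomial of degree $3r$ on $\DCT$, so $p$ is as well; being a single polynomial, $p$ must in particular be $C^{2r}$ across every median. On the line $\langle\p_1,\p_c\rangle$ the only knots are $\p_1$ (with its multiplicity) and $\p_c$, so by the smoothness property of simplex splines a spline in which $\p_1$ has multiplicity $m$ is $C^{3r-m}$ there. Thus $N_1$ is $C^{2r}$ across $\langle\p_1,\p_c\rangle$ (no jump in the $2r$-th derivative), whereas $N_2$ and $N_3$ are only $C^{2r-1}$. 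Writing $J_i:=\bigl[\partial_{\mathbf u}^{2r}N_i\bigr]$ for the jump across this median, the $C^{2r}$ requirement on $p$ becomes $c_2J_2+c_3J_3=0$, since $J_1=0$.

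I would then exploit symmetry. The affine map $\phi$ fixing $\p_1$ and interchanging $\p_2$ and $\p_3$ fixes the line $\langle\p_1,\p_c\rangle$ pointwise, swaps its two sides, reverses $\mathbf u$, and sends $N_2$ to $N_3$; since $2r$ is even, the chain rule gives $J_3=-J_2$. Hence the condition collapses to $(c_2-c_3)J_2=0$. The same argument across $\langle\p_2,\p_c\rangle$ and $\langle\p_3,\p_c\rangle$, using the analogous reflections, yields $c_1=c_3$ and $c_1=c_2$ once the corresponding jumps are known to be nonzero, and together these force $c_1=c_2=c_3$.

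The crux, and the step I expect to be the main obstacle, is to show $J_2\not\equiv0$, i.e.\ that $N_2$ is genuinely only $C^{2r-1}$ across $\langle\p_1,\p_c\rangle$. I would obtain this from the derivative formula applied $2r$ times in the direction $\mathbf u=\p_2-\p_3$: because $\mathbf u$ involves neither $\p_1$ nor $\p_c$, the $r+2$ knots on the median are never removed, and only two terms survive, namely constant multiples of the degree-$r$ simplex splines $\PCT{r+1}{0}{1}{1}$ and $\PCT{r+1}{1}{0}{1}$, supported respectively on the single micro-triangles $\langle\p_1,\p_3,\p_c\rangle$ and $\langle\p_1,\p_2,\p_c\rangle$. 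Each is positive in the interior of its triangle, and their traces on the shared edge $\langle\p_1,\p_c\rangle$ coincide—depending only on the common edge knots $\{\p_1[r+1],\p_c\}$—and are strictly positive on the open median. As each of the two splines vanishes on one side of the median, $J_2$ is the difference of the two nonzero one-sided limits, whose sign pattern makes them add rather than cancel, giving a nonzero multiple of the common trace, as required. The only fiddly points are the bookkeeping of the binomial coefficients generated by the iterated derivative formula and the verification that the two surviving traces agree; both are routine given the restriction property of simplex splines.
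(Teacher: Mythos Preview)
Your argument follows the same route as the paper's: the forward implication via the knot-insertion identity \eqref{eq:Bern-central}, and the converse via the jump of the $2r$-th transverse derivative across each median together with the reflection symmetry (even order of differentiation) yielding $J_3=-J_2$, hence $c_2=c_3$, and cyclically $c_1=c_2=c_3$. The paper uses the normal direction rather than $\mathbf u=\p_2-\p_3$, but this is immaterial.

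The one point where you go beyond the paper is the explicit justification of $J_2\neq0$; the paper simply asserts this from the smoothness behaviour of simplex splines. Your iterated-derivative argument is in the right spirit but the claim that ``only two terms survive'' is not literally correct: once a branch of the recursion exhausts its supply of $\p_2$-knots (which happens after at most $r$ steps along that branch, well before the $2r$-th derivative), the direction $\mathbf u=\p_2-\p_3$ must be re-expressed through the remaining knots $\p_1,\p_3,\p_c$, and further terms are generated. What \emph{is} true is that every such additional term is supported on a single micro-triangle (all its knots lie in $\langle\p_1,\p_3,\p_c\rangle$ or $\langle\p_1,\p_2,\p_c\rangle$), so the two one-sided limits on the median are still determined by disjoint families of terms; but turning this into a clean nonvanishing statement needs more than the bookkeeping you allude to. The quickest fix is to invoke, as the paper does, that the smoothness of a simplex spline across a knot line is \emph{exactly} $C^{d+1-\mu}$ (not merely at least), which immediately gives $J_2\not\equiv0$.
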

\begin{proof}
If $c_1=c_2=c_3$ then from \eqref{eq:Bernstein} and \eqref{eq:Bern-central} it follows $p=3c_1\PCT{r+1}{r+1}{r+1}{0}\in \mathbb{P}_{3r}$.

Conversely, let us assume that $p\in \mathbb{P}_{3r}$.
Let ${\bf u}$ represent one of the micro-edges of $\DCT$ (the edges connecting the split point $\p_c$ with the vertices of $T$), and let $D_{{\bf v}}^{k,+} s$, $D_{{\bf v}}^{k,-} s$ denote the $k$-th derivative in the direction ${\bf v}$ of the spline $s$ restricted to the two micro-triangles of $\DCT$ sharing ${\bf u}$, considered in a given order.
Finally, let us define
$$
J_{{\bf u}, {\bf v}}^k s := D_{{\bf v}}^{k,+} s_{\mid {\bf u}} - D_{{\bf v}}^{k,-} s_{\mid {\bf u}}.
$$
Note that for a spline $s$ which is $C^k$ across ${\bf u}$, we have $J_{{\bf u}, {\bf v}}^k s = 0$ for any ${\bf v}$.

Let us fix $k=2r$ and
$ {\bf u}=\langle \p_1, \p_c \rangle $. Suppose that ${\bf n}$ is a direction normal to ${\bf u}$.
From the smoothness property of simplex splines we deduce
$$J_{{\bf u}, {\bf n}}^{2r} \PCT{r+1}{r}{r+1}{1} \neq 0, \quad J_{{\bf u}, {\bf n}}^{2r} \PCT{r+1}{r+1}{r}{1} \neq 0,$$
because those normalized simplex splines are only $C^{2r-1}$ across ${\bf u}$. On the other hand, both $p$ and  $\PCT{r}{r+1}{r+1}{1}$ are $C^{2 r}$  across ${\bf u} $. Consequently, from \eqref{eq:CT-polynomial} we obtain
\begin{align*}
0=J_{{\bf u}, {\bf n}}^{2 r} p
= c_2 J_{{\bf u},{\bf n}}^{2 r} \PCT{r+1}{r}{r+1}{1}+ c_3 J_{{\bf u},{\bf n}}^{2 r} \PCT{r+1}{r+1}{r}{1}.
\end{align*}
Due to the symmetry of the simplex splines $\PCT{r+1}{r}{r+1}{1}$ and $\PCT{r+1}{r+1}{r}{1}$, and the even order of the directional derivative (i.e., $2r$) we have
$$J_{{\bf u}, {\bf n}}^{2 r}  \PCT{r+1}{r}{r+1}{1} = - J_{{\bf u}, {\bf n}}^{2 r} \PCT{r+1}{r+1}{r}{1} \neq 0.$$
Thus, $c_2 = c_3$.
Similarly, when considering $ {\bf u}=\langle \p_2, \p_c \rangle $, we infer that $c_1 = c_3$. This concludes the proof.
\end{proof}

We now provide a normalized simplex spline basis for $\SCT{2r-1}{3r}$.

\begin{proposition}\label{prop:CT-basis}
The normalized simplex splines belonging to the set
\begin{equation}\label{eq:CT-basis}
\begin{aligned}
\Bct{r} &:= \left\lbrace \PCT{i}{j}{k}{0},\ i,j,k\geq 1,\ i+j+k=3r+3
\right\rbrace \setminus
\left\lbrace\PCT{r+1}{r+1}{r+1}{0} \right\rbrace \\
&\quad\ \bigcup \left\lbrace \PCT{r}{r+1}{r+1}{1},\, \PCT{r+1}{r}{r+1}{1},\, \PCT{r+1}{r+1}{r}{1} \right\rbrace
\end{aligned}
\end{equation}
form a basis for $\SCT{2r-1}{3r}$.
\end{proposition}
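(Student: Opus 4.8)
The plan is to verify the three standard ingredients for a basis: that the cardinality of $\Bct{r}$ equals $\dim \SCT{2r-1}{3r}$, that every element of $\Bct{r}$ lies in $\SCT{2r-1}{3r}$, and that the functions in $\Bct{r}$ are linearly independent. For the cardinality, I would first recall from \eqref{eq:Bernstein} that the simplex splines $\PCT{i}{j}{k}{0}$ with $i,j,k\ge 1$ and $i+j+k=3r+3$ are precisely the (scaled) Bernstein polynomials of degree $3r$, hence number $\binom{3r+2}{2}=\dim\mathbb{P}_{3r}$. Removing the central one and adjoining the three simplex splines carrying a knot at $\p_c$ yields $\binom{3r+2}{2}-1+3=\dim\mathbb{P}_{3r}+2$, which matches $\dim \SCT{2r-1}{3r}$ by \eqref{eq:CT-dim}. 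Thus it suffices to prove linear independence, since then $\Bct{r}$ is automatically a spanning set of the correct size.

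For membership, each function in \eqref{eq:CT-basis} is a piecewise polynomial of total degree $3r$ because every defining sequence $\mathcal{P}$ contains exactly $3r+3$ knots. The splines $\PCT{i}{j}{k}{0}$ are globally polynomial on $T$ and therefore trivially belong to $\SCT{2r-1}{3r}$. For the three splines with a knot at $\p_c$, I would invoke the smoothness property of simplex splines: across a micro-edge of $\DCT$ the spline is $C^{3r+1-\mu}$, where $\mu$ counts the knots (with multiplicity) lying on that edge. For instance, for $\PCT{r}{r+1}{r+1}{1}$ the edge $\langle \p_2,\p_c\rangle$ carries $\mu=(r+1)+1=r+2$ knots, giving smoothness exactly $C^{2r-1}$, and similarly along $\langle \p_3,\p_c\rangle$, whereas along $\langle \p_1,\p_c\rangle$ one finds $\mu=r+1$ and smoothness $C^{2r}$. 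Hence this spline is at least $C^{2r-1}$ across every interior edge, and by the symmetry of the construction the same holds for the other two; all three thus lie in $\SCT{2r-1}{3r}$.

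The crux is linear independence. I would suppose a vanishing combination
$$
q + c_1 \PCT{r}{r+1}{r+1}{1} + c_2 \PCT{r+1}{r}{r+1}{1} + c_3 \PCT{r+1}{r+1}{r}{1} = 0,
$$
where $q$ collects the contributions of the non-central Bernstein polynomials, so $q\in\mathbb{P}_{3r}$. Rearranging shows that $c_1 \PCT{r}{r+1}{r+1}{1} + c_2 \PCT{r+1}{r}{r+1}{1} + c_3 \PCT{r+1}{r+1}{r}{1}=-q$ is a polynomial, so Lemma~\ref{lem:CT-jump} forces $c_1=c_2=c_3=:c$. The knot insertion identity \eqref{eq:Bern-central} then rewrites this combination as $3c\,\PCT{r+1}{r+1}{r+1}{0}$, the central Bernstein polynomial, and the vanishing relation becomes $q+3c\,\PCT{r+1}{r+1}{r+1}{0}=0$. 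Since the full Bernstein set $\bigl\{\PCT{i}{j}{k}{0}: i,j,k\ge1,\ i+j+k=3r+3\bigr\}$ is linearly independent, every coefficient of $q$ vanishes and $c=0$, whence $c_1=c_2=c_3=0$. This proves independence and, with the cardinality count, completes the argument.

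I expect the main obstacle to be exactly this independence step, and in particular the reliance on Lemma~\ref{lem:CT-jump}: without it one could not rule out a nontrivial relation in which the three knot-carrying splines conspire to produce a polynomial cancelling against $q$. The membership check is routine once the simplex-spline smoothness rule is applied edge by edge, and the cardinality bookkeeping is immediate. The whole scheme hinges on the interplay between \eqref{eq:Bern-central}, which recovers the removed central Bernstein polynomial as the average of the three knot-carrying splines, and the jump criterion of Lemma~\ref{lem:CT-jump}, which isolates precisely the two extra degrees of freedom distinguishing $\SCT{2r-1}{3r}$ from $\mathbb{P}_{3r}$.
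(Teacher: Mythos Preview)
Your proposal is correct and follows essentially the same approach as the paper's own proof: matching cardinality via \eqref{eq:CT-dim}, checking membership through the simplex-spline smoothness rule, and proving linear independence by invoking Lemma~\ref{lem:CT-jump} to force $c_1=c_2=c_3$, then using \eqref{eq:Bern-central} together with the linear independence of the Bernstein basis. Your edge-by-edge smoothness computation is slightly more explicit than the paper's, but otherwise the arguments coincide.
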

\begin{proof}
Firstly, it is directly seen that the cardinality of $\Bct{r}$ equals $\dim \, (\mathbb{P}_{3r})+2$, which agrees with the dimension of $\SCT{2r-1}{3r}$.
Then, we observe that all the elements of $\Bct{r}$ belong to $\SCT{2r-1}{3r}$.
Indeed, for $i,j,k\geq 1$, $i+j+k=3r+3$, we know that $\PCT{i}{j}{k}{0}$ is a Bernstein polynomial of degree $3r$, see \eqref{eq:Bernstein}, thus it belongs to $\SCT{2r-1}{3r}$. From the  properties of simplex splines, the remaining three elements in $\Bct{r}$ are piecewise polynomials of degree $3r$ and smoothness $C^{2r-1}$ across the micro-edges of $\DCT$, so they belong to $\SCT{2r-1}{3r}$ as well.

Finally, we show that the functions in \eqref{eq:CT-basis} are linearly independent. Let
$$
\Hct{r}:= \left\lbrace \left( i, j, k, \ell\right):\, \PCT{i}{j}{k}{\ell} \in   \Bct{r} \right\rbrace
$$
be the set of quadruples indicating the knot multiplicities of the simplex splines in \eqref{eq:CT-basis}. Suppose there exist real coefficients $ c_{i, j, k, \ell} $ such that
$$ 
\sum_{(i, j, k, \ell) \in \Hct{r}} c_{i, j, k, \ell} \PCT{i}{j}{k}{\ell}(\mathbf{x}) = 0, \quad \forall \ \mathbf{x} \in T.
$$
 Thus,
\begin{equation*}
\begin{aligned}
 &\sum_{\left( i, j, k, 0\right) \in \Hct{r}} c_{i, j, k, 0} \PCT{i}{j}{k}{0} (\x) \\
 & = -\left( c_{r, r+1, r+1, 1} \PCT{r}{r+1}{r+1}{1} + c_{r+1, r, r+1, 1} \PCT{r+1}{r}{r+1}{1} + c_{r+1, r+1, r, 1} \PCT{r+1}{r+1}{r}{1} \right) (\x).
\end{aligned}
\end{equation*}
The left-hand side of 
the above equation
is a polynomial of degree less than or equal to $3r$. Applying Lemma~\ref{lem:CT-jump} gives $ c_{r, r+1, r+1, 1} = c_{r+1, r, r+1, 1} = c_{r+1, r+1, r, 1} = c$. Then, from \eqref{eq:Bern-central} we deduce
$$
\sum_{(i, j, k, 0) \in \Hct{r}} c_{i, j, k, 0} \PCT{i}{j}{k}{0}(\x) = - 3 c \PCT{r+1}{r+1}{r+1}{0}(\x).
$$
By the linear independence of Bernstein polynomials of degree $3r$, this implies that $ c_{i, j, k, \ell} = 0 $ for all $ (i, j, k, \ell) \in \Hct{r} $.
This concludes the proof.
\end{proof}

We are now ready to prove the main result of this section.
\begin{theorem}\label{thm:CT-quad}
Any quadrature rule $\qd{[n_0, n_1, n_2]}{\mathbb{P}_{3r}}$ is exact on the spline space $\SCT{2r-1}{3r}.$
\end{theorem}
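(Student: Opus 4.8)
The plan is to combine the explicit basis of Proposition~\ref{prop:CT-basis} with the symmetry of the rule. Since both the integral $\int_T \cdot\,\mathrm{d}\x$ and the functional $\qd{[n_0,n_1,n_2]}{\mathbb{P}_{3r}}$ are linear, it suffices to verify exactness on each element of $\Bct{r}$ in \eqref{eq:CT-basis}. This basis splits into two groups. The first consists of the Bernstein polynomials $\PCT{i}{j}{k}{0}$ with $i,j,k\geq1$ and $i+j+k=3r+3$ (all but the central one): by \eqref{eq:Bernstein} these lie in $\mathbb{P}_{3r}$, so the rule integrates them exactly by hypothesis. The second group consists of the three genuinely piecewise-polynomial splines $\PCT{r}{r+1}{r+1}{1}$, $\PCT{r+1}{r}{r+1}{1}$, $\PCT{r+1}{r+1}{r}{1}$, which carry the whole difficulty.

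For these three I would first observe that they form a single orbit under the symmetry group of $T$. The reflection $\sigma$ of $T$ interchanging $\p_1$ and $\p_2$ (and fixing $\p_3$, $\p_c$) is an affine isometry mapping $T$ onto itself and carrying the knot multiplicities $(i,j,k,\ell)$ to $(j,i,k,\ell)$; hence $\PCT{r}{r+1}{r+1}{1} = \PCT{r+1}{r}{r+1}{1}\circ\sigma$, and the reflection interchanging $\p_1$ and $\p_3$ relates the first spline to the third. Because the rule is symmetric, its nodes and weights are invariant under every such $\sigma$, so $\qd{[n_0,n_1,n_2]}{\mathbb{P}_{3r}}(g\circ\sigma)=\qd{[n_0,n_1,n_2]}{\mathbb{P}_{3r}}(g)$ for every $g$; a change of variables with $|\det D\sigma|=1$ and $\sigma(T)=T$ gives the analogous invariance for the exact integral. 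Consequently the rule returns the same value on all three splines, and so does the integral; denote these common values by $Q_\ast$ and $I_\ast$.

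I would then close the argument with the knot-insertion identity \eqref{eq:Bern-central}, which says that the sum of the three splines equals $3\,\PCT{r+1}{r+1}{r+1}{0}$, a member of $\mathbb{P}_{3r}$. Applying the rule to both sides and using linearity yields $3Q_\ast = 3\,\qd{[n_0,n_1,n_2]}{\mathbb{P}_{3r}}\bigl(\PCT{r+1}{r+1}{r+1}{0}\bigr)$, while applying the integral yields $3I_\ast = 3\int_T \PCT{r+1}{r+1}{r+1}{0}(\x)\,\mathrm{d}\x$. Since the central Bernstein polynomial lies in $\mathbb{P}_{3r}$, polynomial exactness equates the two right-hand sides, forcing $Q_\ast = I_\ast$. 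Thus the rule is exact on each of the three splines, and by linearity on the whole of $\Bct{r}$, hence on $\SCT{2r-1}{3r}$.

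The conceptual crux, and the only nontrivial step, is the symmetry reduction: it collapses three individually intractable exactness questions into the single scalar identity $Q_\ast=I_\ast$, which is then settled for free by the polynomiality of the sum. Everything else is bookkeeping. The point requiring care is the orbit claim itself, namely matching the concrete reflections of the knot configuration with the invariance of the rule; one must verify that the three splines are genuinely permuted among themselves (not merely that the rule is symmetric), since it is precisely this compatibility that the rest of the proof exploits.
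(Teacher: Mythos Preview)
Your proof is correct and follows essentially the same strategy as the paper's: reduce to the basis $\Bct{r}$, dispose of the Bernstein polynomials by hypothesis, use symmetry to equate the three quadrature values (and the three integrals) on the non-polynomial splines, and then invoke \eqref{eq:Bern-central} together with polynomial exactness on the central Bernstein polynomial to conclude. One minor terminological slip: the affine map $\sigma$ permuting two vertices of a general triangle $T$ need not be an isometry, but your argument only uses $\sigma(T)=T$ and $|\det D\sigma|=1$, both of which hold and which you state correctly.
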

\begin{proof}
It suffices to show that the rule produces the exact value for the integral of all the elements of a basis of the space.
Since $\qd{[n_0, n_1, n_2]}{\mathbb{P}_{3r}}$ is exact on $\mathbb{P}_{3r}$, by Proposition~\ref{prop:CT-basis}, we only need to prove that $\qd{[n_0, n_1, n_2]}{\mathbb{P}_{3r}}$ is exact for $\PCT{r}{r+1}{r+1}{1}$, $\PCT{r+1}{r}{r+1}{1}$, $\PCT{r+1}{r+1}{r}{1}$.

Starting from \eqref{eq:Bern-central} and exploiting the symmetry of both the quadrature rule and the simplex splines, we obtain
\begin{align*}
&\qd{[n_0, n_1, n_2]}{\mathbb{P}_{3r}} \left( \PCT{r+1}{r+1}{r+1}{0} \right) = \qd{[n_0, n_1, n_2]}{\mathbb{P}_{3r}} \left( \PCT{r}{r+1}{r+1}{1} \right) \\
&\quad = \qd{[n_0, n_1, n_2]}{\mathbb{P}_{3r}} \left( \PCT{r+1}{r}{r+1}{1} \right) = \qd{[n_0, n_1, n_2]}{\mathbb{P}_{3r}} \left( \PCT{r+1}{r+1}{r}{1} \right).
\end{align*}
Furthermore, again from \eqref{eq:Bern-central} and the symmetry of the simplex splines, we get
$$
\int_T \PCT{r+1}{r+1}{r+1}{0}  = \int_T \PCT{r}{r+1}{r+1}{1} = \int_T  \PCT{r+1}{r}{r+1}{1}= \int_T \PCT{r+1}{r+1}{r}{1}.
$$
Since $\qd{[n_0, n_1, n_2]}{\mathbb{P}_{3r}}$ is exact for $\PCT{r+1}{r+1}{r+1}{0}$ , it follows that it is also exact for $\PCT{r}{r+1}{r+1}{1}$, $\PCT{r+1}{r}{r+1}{1}$, and $\PCT{r+1}{r+1}{r}{1}$. This concludes the proof.
\end{proof}

\begin{example}
The Hammer--Stroud quadrature rule
\begin{equation}\label{eq:HS-cubic}
\mathcal{Q}_{\mathbb{P}_3}^{\mathrm{HS}} (f) := \area (T) \left(  \frac{25}{48} \sum_{ \bm{\alpha} \in \Pi \left\lbrace \frac{3}{5}, \frac{1}{5}, \frac{1}{5} \right\rbrace } f \left( \T \left(\bm{\alpha} \right)\right) -\frac{9}{16} f \left( \T \left( \frac{1}{3}, \frac{1}{3}, \frac{1}{3}\right)\right) \right)
\end{equation}
is exact for cubic polynomials \cite{HM56}. The rule $\mathcal{Q}_{\mathbb{P}_3}^{\mathrm{HS}}$ is a quadrature rule of type $\qd{[1, 1, 0]}{\mathbb{P}_{3}}$, thus by Theorem~\ref{thm:CT-quad} (with $r=1$), it  exactly integrates the space $\SCT{1}{3}$. The same result has already been obtained in \cite{KB19}. However, the methodology employed in \cite{KB19} differs significantly from the approach used in this work.
\end{example}

\begin{remark}
The smoothness $C^{2r-1}$ is the minimum for splines of degree $3r$ on $\DCT$ to ensure that any quadrature rule of type $\qd{[n_0, n_1, n_2]}{\mathbb{P}_{3r}}$ remains exact on the spline space.
This can be seen by considering the simplest case $r=1$ and 
the normalized simplex spline $\PCT{3}{2}{0}{1}$, which is supported on $\langle\p_1, \p_2, \p_c \rangle $, $C^0$ across the micro-edge $\langle \p_1, \p_c \rangle $ and $C^1$ across the micro-edge $\langle \p_2, \p_c \rangle $. This non-trivial function belongs to $\SCT{0}{3}$ and is non-negative over the triangle $T$, so it has a positive integral over $T$. However, the Hammer--Stroud rule $\mathcal{Q}_{\mathbb{P}_3}^{\mathrm{HS}}$ defined in \eqref{eq:HS-cubic} yields a zero value when applied to $\PCT{3}{2}{0}{1}$ because the function vanishes along all the micro-edges of $\DCT$.
Therefore, the rule $\mathcal{Q}_{\mathbb{P}_3}^{\mathrm{HS}}$ fails to achieve exactness on the space $\SCT{0}{3}$.
\end{remark}

\section{Powell--Sabin split}\label{sec:PS-split}
Let $\SPS{\rho}{d}$ be the spline space of degree $d$ and smoothness $\rho$ defined on the triangulation $\DPS$ of a given triangle $T$. The next proposition provides the dimension of this space.
\begin{proposition}
For any $0\leq\rho<d$, we have
\begin{equation}\label{eq:PS-dim-full}
\dim \, \SPS{\rho}{d} = \binom{\rho+2}{2} + 6 \binom{d-\rho+1}{2}+\sum_{j=1}^{d-\rho} \max \{ \rho+1-2j,0 \}.
\end{equation}
\end{proposition}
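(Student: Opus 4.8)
The plan is to mirror the proof of \eqref{eq:CT-dim-full} and to reduce the statement to a single application of \cite[Theorem~9.3]{LS07}, which computes $\dim \mathbb{S}^\rho_d$ on a \emph{cell}, i.e., a triangulation consisting of the triangles in the star of one interior vertex. The triangulation $\DPS$ is exactly such a cell, since all six micro-triangles share the single interior vertex $\p_c$. The first step is therefore to identify the two parameters entering that theorem for $\DPS$: the number of micro-triangles (equivalently, the degree of the interior vertex) and the number of distinct slopes among the edges emanating from it.

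Since $\p_c$ has degree $6$, I would take $n=6$. Determining the slope parameter $m_v$ is the crux of the argument, and this is where the symmetric (uniform) nature of the split is used: the three medians of $T$ all pass through the barycenter $\p_c$, so the six micro-edges incident to $\p_c$ occur in three collinear pairs and hence carry only three distinct slopes. Consequently $m_v=3$, exactly as in the Clough--Tocher case, despite the doubling of the vertex degree. Feeding $n=6$ and $m_v=3$ into \cite[Theorem~9.3]{LS07} then yields
$$
\binom{\rho+2}{2} + 6\binom{d-\rho+1}{2} + \sum_{j=1}^{d-\rho}\max\{\rho+1-2j,0\},
$$
which is precisely \eqref{eq:PS-dim-full}. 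Note that the summation term is identical to the one in \eqref{eq:CT-dim-full} because both splits share the slope count $m_v=3$, whereas the coefficient $6$ replaces $3$ to record the larger number of micro-triangles.

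The step I expect to be the main obstacle is justifying $m_v=3$ rather than the generic value $m_v=6$: a degree-$6$ interior vertex in general position would carry six distinct slopes and produce a strictly smaller dimension. The whole argument therefore hinges on the collinearity of the opposite micro-edges at $\p_c$, which is a consequence of the uniform split, together with a check that \cite[Theorem~9.3]{LS07} is stated with enough generality to cover this degenerate slope configuration. Once this geometric fact is secured, the remaining verification is routine binomial bookkeeping.
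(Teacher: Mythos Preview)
Your proposal is correct and follows exactly the paper's own argument: apply \cite[Theorem~9.3]{LS07} with $n=6$ and $m_v=3$. The paper states this in a single sentence without elaborating on the slope count, so your added justification that the six micro-edges at $\p_c$ lie along the three medians (hence $m_v=3$) simply makes explicit what the paper leaves implicit.
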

\begin{proof}
This follows immediately from \cite[Theorem~9.3]{LS07} by taking $n=6$ and $m_v=3$ in that theorem.
\end{proof}

As a natural generalization of the famous $C^1$ quadratic Powell--Sabin macro-element \cite{PS77}, we consider non-trivial spline spaces of degree $d=2r$ and maximal smoothness $\rho=2r-1$ on $\DPS$, for any integer $r\geq1$.
This spline space is larger than $\mathbb{P}_{2r}$ because the dimension formula \eqref{eq:PS-dim-full} gives
$$
\dim \, \SPS{2r-1}{2r} = \dim \, \mathbb{P}_{2r} + 3.
$$
In particular, we have
$$
\dim \, \SPS{1}{2} = 6 + 3,\quad  \dim \, \SPS{3}{4} = 15 + 3,\quad  \dim \, \SPS{5}{6} = 28 + 3.
$$
In the following, we verify whether symmetric quadrature rules that are exact on $\mathbb{P}_{2r}$ remain exact on $\SPS{2r-1}{2r}$.

Similar to the case $\DCT$, we first construct a basis for $\SPS{2r-1}{2r}$ consisting of normalized simplex splines.
The graphical notation
\begin{equation}\label{eq:PS-graphic}
\PPS{i}{j}{k}{\ell}{m}{n}
\end{equation}
is used to represent the normalized simplex spline $ N_\mathcal{P}$ specified by the sequence of points
$$\mathcal{P}= \lbrace \p_1[i], \p_2[j], \p_3[k], \p_{1,2}[\ell], \p_{2,3}[m], \p_{3,1}[n] \rbrace.$$
Some examples are visualized in Figure~\ref{fig:PS-splines}. Note that
$$
\left\lbrace  \PPS{i}{j}{k}{0}{0}{0}, \ i,j,k\geq 1, \ i+j+k=2r+3
\right\rbrace
$$
is the set of Bernstein polynomials of degree $2r$ related to $T$.

\begin{figure}[t!]
\centering
\includegraphics[scale=0.45]{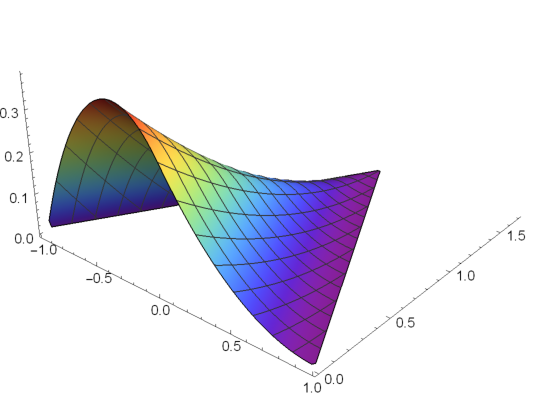} \includegraphics[scale=0.45]{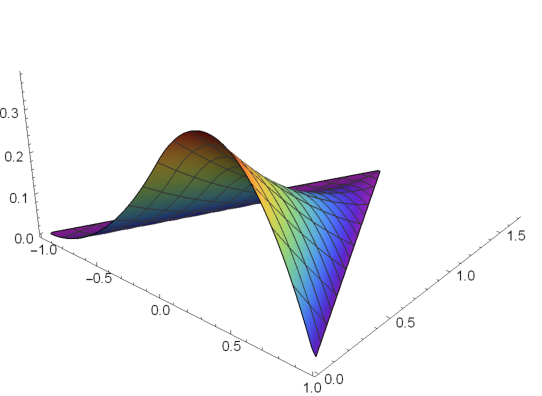} \includegraphics[scale=0.45]{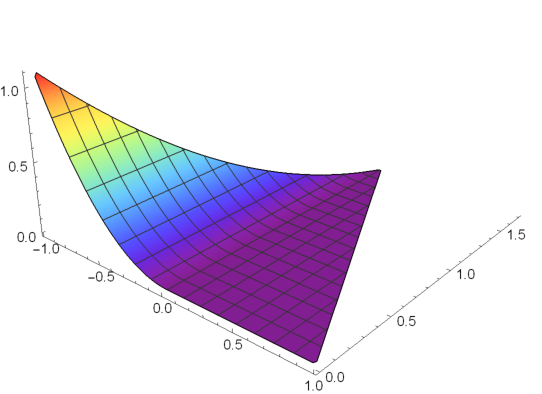}
\\[0.2cm]
$\put(-135,20){ \PPS{2}{1}{1}{1}{0}{0}} \put(-10,20){ \PPS{1}{2}{1}{1}{0}{0}} \put(110,20){ \PPS{3}{0}{1}{1}{0}{0}}$
\caption{Plots of simplex splines as in \eqref{eq:PS-graphic} of degree $2$.}\label{fig:PS-splines}
\end{figure}

As before, a basis for $\SPS{2r-1}{2r}$ can be obtained from the set of Bernstein polynomials of degree $2r$, by applying knot insertion to few of them.
More precisely, we consider the three Bernstein polynomials
$$
 \PPS{1}{r+1}{r+1}{0}{0}{0}, \quad  \PPS{r+1}{1}{r+1}{0}{0}{0}, \quad  \PPS{r+1}{r+1}{1}{0}{0}{0},
$$
and for each of them we insert the midpoint of the edge of $T$ where it does not vanish. In other words, we insert the point $\p_{2,3}$, $\p_{3,1}$, and $\p_{1,2}$, respectively.
Then, the knot insertion formula gives
\begin{equation}\label{eq:Bern-edge}
\begin{aligned}
{\PPS{1}{r+1}{r+1}{0}{0}{0}} &= \frac{1}{2} \left( \PPS{1}{r}{r+1}{0}{1}{0} + \PPS{1}{r+1}{r}{0}{1}{0}\right), \\
{\PPS{r+1}{1}{r+1}{0}{0}{0}} &= \frac{1}{2} \left( \PPS{r+1}{1}{r}{0}{0}{1} + \PPS{r}{1}{r+1}{0}{0}{1}\right), \\
{\PPS{r+1}{r+1}{1}{0}{0}{0}} &= \frac{1}{2} \left( \PPS{r}{r+1}{1}{1}{0}{0} + \PPS{r+1}{r}{1}{1}{0}{0}\right).
\end{aligned}
\end{equation}

The next lemma serves to prove the linear independence of the simplex splines we are dealing with.
\begin{lemma}\label{lem:PS-jump}
For $c_1,c_2,\ldots,c_6\in\mathbb{R}$, let
$$
p = c_1 \PPS{1}{r}{r+1}{0}{1}{0} + c_2\PPS{1}{r+1}{r}{0}{1}{0}+ c_3\PPS{r+1}{1}{r}{0}{0}{1} + c_4\PPS{r}{1}{r+1}{0}{0}{1}+ c_5\PPS{r}{r+1}{1}{1}{0}{0} + c_6\PPS{r+1}{r}{1}{1}{0}{0}.
$$
Then, $p \in \mathbb{P}_{2r}$ if and only if
$ c_1 = c_2$, $c_3=c_4$, and $c_5=c_6$.
\end{lemma}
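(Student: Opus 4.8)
The plan is to mirror the proof of Lemma~\ref{lem:CT-jump}, with the three medians of $T$ now playing the role that the single micro-edge played there: each median will isolate exactly one of the three pairs of simplex splines. Throughout I write $p = \sum_{i=1}^{6} c_i S_i$, where $S_1,\dots,S_6$ are the six normalized simplex splines appearing in the statement, in the given order. The ``if'' direction is immediate: if $c_1=c_2$, $c_3=c_4$, and $c_5=c_6$, then grouping the six terms into the three pairs and invoking the knot-insertion identities \eqref{eq:Bern-edge} shows that $p$ equals $2c_1\,\PPS{1}{r+1}{r+1}{0}{0}{0} + 2c_3\,\PPS{r+1}{1}{r+1}{0}{0}{0} + 2c_5\,\PPS{r+1}{r+1}{1}{0}{0}{0}$, a sum of Bernstein polynomials of degree $2r$, hence $p \in \mathbb{P}_{2r}$.

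For the converse I assume $p \in \mathbb{P}_{2r}$ and reuse the jump operator $J^{k}_{\mathbf{u},\mathbf{v}}$ introduced in the proof of Lemma~\ref{lem:CT-jump}. The first task is to determine, for each $S_i$, across which medians it fails to be $C^{2r}$. The decisive observation is a knot-line count: a median is a knot line of a given simplex spline only when it passes through at least two of its knot locations. The median $\langle \p_1, \p_{2,3}\rangle$ contains precisely the two knots $\p_1$ and $\p_{2,3}$ of $S_1$ and $S_2$, each with multiplicity one, so $\mu = 2$ and the smoothness property of simplex splines makes $S_1$ and $S_2$ exactly $C^{2r-1}$ across it; the remaining four splines meet this median in a single knot location only, so the median is not one of their knot lines and they reduce to ordinary polynomials across it. The same holds, mutatis mutandis, for $\langle \p_2, \p_{3,1}\rangle$ (carrying $S_3, S_4$) and $\langle \p_3, \p_{1,2}\rangle$ (carrying $S_5, S_6$).

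With this pattern established, fix $\mathbf{u} = \langle \p_1, \p_{2,3}\rangle$, let $\mathbf{n}$ be normal to $\mathbf{u}$, and apply $J^{2r}_{\mathbf{u},\mathbf{n}}$ to $p = \sum_i c_i S_i$. Since $p$ is a polynomial and $S_3,\dots,S_6$ are $C^{2r}$ across $\mathbf{u}$, only the first two terms survive. The splines $S_1 = \PPS{1}{r}{r+1}{0}{1}{0}$ and $S_2 = \PPS{1}{r+1}{r}{0}{1}{0}$ are interchanged by the reflection across $\mathbf{u}$, that is, the swap $\p_2 \leftrightarrow \p_3$, which fixes $\mathbf{u}$ pointwise and reverses $\mathbf{n}$; because the derivative order $2r$ is even, this forces $J^{2r}_{\mathbf{u},\mathbf{n}} S_1 = - J^{2r}_{\mathbf{u},\mathbf{n}} S_2 \neq 0$. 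Hence $0 = (c_1 - c_2)\, J^{2r}_{\mathbf{u},\mathbf{n}} S_1$, giving $c_1 = c_2$, and repeating the argument on the medians $\langle \p_2, \p_{3,1}\rangle$ and $\langle \p_3, \p_{1,2}\rangle$ yields $c_3 = c_4$ and $c_5 = c_6$.

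The step I expect to be the main obstacle is the smoothness bookkeeping in the converse: verifying cleanly that across each median exactly one pair of splines carries a nonzero $2r$-th normal-derivative jump while the other four are smooth. This rests entirely on the knot-line count above, and the point requiring care is that the barycenter $\p_c$, through which all three medians pass, is never itself a knot of these splines, so it contributes nothing to $\mu$; the jumps may therefore be evaluated on the open micro-edges away from $\p_c$, where the smoothness across each median is exactly the value predicted by the count.
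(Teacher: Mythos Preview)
Your proof is correct and follows exactly the approach the paper intends: the paper's own proof consists of the single sentence that one should reuse the argument of Lemma~\ref{lem:CT-jump} together with~\eqref{eq:Bern-edge}, and your write-up carries this out in full, including the knot-line count showing that each median is a knot line for precisely one of the three pairs $(S_1,S_2)$, $(S_3,S_4)$, $(S_5,S_6)$ and the reflection argument giving $J^{2r}_{\mathbf{u},\mathbf{n}}S_{2i-1}=-J^{2r}_{\mathbf{u},\mathbf{n}}S_{2i}\neq0$. Your closing remark that $\p_c$ is not a knot of any of the six splines (so the jump can be read off on either open micro-edge of a median) is a useful clarification that the paper leaves implicit.
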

\begin{proof}
Taking into account \eqref{eq:Bern-edge},
the same line of arguments as in the proof of Lemma~\ref{lem:CT-jump} can be used to obtain this result.
\end{proof}

We now provide a normalized simplex spline basis for $\SPS{2r-1}{2r}$.
\begin{proposition} \label{prop:PS-basis}
The normalized simplex splines belonging to the set
\begin{equation}\label{eq:PS-basis}
\begin{aligned}
\Bps{r} &:= \left\lbrace  \PPS{i}{j}{k}{0}{0}{0}, \ i,j,k\geq 1, \ i+j+k=2r+3
\right\rbrace \\
&\quad\ \setminus\left\lbrace  \PPS{1}{r+1}{r+1}{0}{0}{0}, \ \PPS{r+1}{1}{r+1}{0}{0}{0}, \PPS{r+1}{r+1}{1}{0}{0}{0}
\right\rbrace \\
&\hspace*{-1.75cm}\quad\ \bigcup
\left\lbrace  \PPS{1}{r}{r+1}{0}{1}{0},\ \PPS{1}{r+1}{r}{0}{1}{0},\ \PPS{r+1}{1}{r}{0}{0}{1},\ \PPS{r}{1}{r+1}{0}{0}{1},\ \PPS{r}{r+1}{1}{1}{0}{0},\ \PPS{r+1}{r}{1}{1}{0}{0}
\right\rbrace
\end{aligned}
\end{equation}
form a basis for $\SPS{2r-1}{2r}$.
\end{proposition}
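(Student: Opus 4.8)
The plan is to reproduce, almost verbatim, the three-step template used in Proposition~\ref{prop:CT-basis}: match cardinalities with the known dimension, verify membership in the space, and then prove linear independence. For the cardinality, I would observe that the Bernstein polynomials $\PPS{i}{j}{k}{0}{0}{0}$ with $i,j,k\geq 1$ and $i+j+k=2r+3$ number exactly $\dim\,\mathbb{P}_{2r}$. The set $\Bps{r}$ removes three of these and adjoins the six knot-inserted simplex splines, so its cardinality is $\dim\,\mathbb{P}_{2r}-3+6=\dim\,\mathbb{P}_{2r}+3$, which by the dimension formula \eqref{eq:PS-dim-full} equals $\dim\,\SPS{2r-1}{2r}$.

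For membership, the retained Bernstein polynomials of degree $2r$ obviously lie in $\SPS{2r-1}{2r}$. For each of the six adjoined splines I would invoke the smoothness property of simplex splines: each is a piecewise polynomial of total degree $2r$, and its only reductions of smoothness occur across those micro-edges of $\DPS$ that are genuine knot lines carrying two of its knots, where it is $C^{2r-1}$; across the remaining micro-edges (which are not knot lines of the spline) it is smooth. Hence each adjoined spline is at least $C^{2r-1}$ across all micro-edges and therefore belongs to $\SPS{2r-1}{2r}$.

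The substance of the proof is the linear independence. Introducing, in analogy with $\Hct{r}$, the index set $\Hps{r}$ of knot-multiplicity sextuples of the splines in \eqref{eq:PS-basis}, I would assume a vanishing linear combination over $\Hps{r}$ and transfer the six non-polynomial terms to the right-hand side, leaving a polynomial of degree at most $2r$ on the left. Applying Lemma~\ref{lem:PS-jump} to this identity forces the three coefficient pairs to coincide, that is $c_1=c_2$, $c_3=c_4$, and $c_5=c_6$. Substituting the knot-insertion identities \eqref{eq:Bern-edge} then collapses each matched pair into one of the three removed Bernstein polynomials $\PPS{1}{r+1}{r+1}{0}{0}{0}$, $\PPS{r+1}{1}{r+1}{0}{0}{0}$, and $\PPS{r+1}{r+1}{1}{0}{0}{0}$. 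The original relation thereby becomes a linear combination of all Bernstein polynomials of degree $2r$ equal to zero, whence every coefficient vanishes by the linear independence of the Bernstein basis.

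The main obstacle is the linear independence step, and concretely the bookkeeping surrounding Lemma~\ref{lem:PS-jump}: unlike the Clough--Tocher case, where a single central Bernstein polynomial is split into three pieces, here three distinct Bernstein polynomials are each split across a distinct micro-edge of $\DPS$. Thus the jump analysis underlying Lemma~\ref{lem:PS-jump} must be carried out simultaneously along the three relevant micro-edges, each pair of splines being tied to one edge of $T$ and the midpoint inserted there. Once Lemma~\ref{lem:PS-jump} is granted, however, the reduction via \eqref{eq:Bern-edge} is routine and the conclusion follows exactly as in Proposition~\ref{prop:CT-basis}.
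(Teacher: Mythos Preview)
Your proposal is correct and follows essentially the same approach as the paper's own proof, which is itself very terse and simply refers back to the argument of Proposition~\ref{prop:CT-basis}. You have in fact spelled out the cardinality count, the membership verification via the simplex-spline smoothness property, and the linear-independence reduction through Lemma~\ref{lem:PS-jump} and \eqref{eq:Bern-edge} in more detail than the paper does.
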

\begin{proof}
The cardinality of $\Bps{r} $ is clearly $\dim \, \mathbb{P}_{2r}+3$ and thus agrees with the dimension of $\SPS{2r-1}{2r}$. Moreover,
from the properties of simplex splines, all the elements of $\Bps{r} $ belong to $\SPS{1}{2}$. Their linear independence follows from Lemma~\ref{lem:PS-jump}, with the same line of arguments as in the proof of Proposition~\ref{prop:CT-basis}.
\end{proof}

We are now ready to present the main result of this section.
\begin{theorem}\label{thm:PS-quad}
Any quadrature rule $\qd{[n_0, n_1, n_2]}{\mathbb{P}_{2r}}$ is exact on the spline space $\SPS{2r-1}{2r}$.
\end{theorem}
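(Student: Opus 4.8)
The plan is to imitate, step for step, the proof of Theorem~\ref{thm:CT-quad}. Since any quadrature rule is linear, it suffices to verify exactness on a basis of $\SPS{2r-1}{2r}$, and Proposition~\ref{prop:PS-basis} furnishes such a basis built from normalized simplex splines. This basis splits into two groups: the Bernstein polynomials $\PPS{i}{j}{k}{0}{0}{0}$ (with $i,j,k\geq 1$, $i+j+k=2r+3$), on which $\qd{[n_0, n_1, n_2]}{\mathbb{P}_{2r}}$ is automatically exact because they belong to $\mathbb{P}_{2r}$, and the six edge-split simplex splines listed in \eqref{eq:PS-basis}. Hence the whole task reduces to proving exactness on these six functions.

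The key observation is that the six extra splines fall into three pairs, each pair being interchanged by one of the three reflections of $T$. For instance, $\PPS{1}{r}{r+1}{0}{1}{0}$ and $\PPS{1}{r+1}{r}{0}{1}{0}$ are swapped by the reflection that fixes $\p_1$ and the midpoint $\p_{2,3}$ while exchanging $\p_2\leftrightarrow\p_3$, and analogous reflections handle the pairs sitting on the other two edges. Since such a reflection carries the knot sequence of one member of a pair onto that of the other, the two splines are related by composition with that reflection. Invoking the symmetry of $\qd{[n_0, n_1, n_2]}{\mathbb{P}_{2r}}$ then shows that the rule returns one and the same value on both members of a pair; the same symmetry gives equality of their exact integrals over $T$.

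I would then feed the knot-insertion identities \eqref{eq:Bern-edge}, which express each of the three removed Bernstein polynomials as the average of the matching pair, into the (linear) quadrature rule. Applied to the first identity and combined with the equal-value property inside the pair, this yields
\begin{align*}
&\qd{[n_0, n_1, n_2]}{\mathbb{P}_{2r}} \left( \PPS{1}{r+1}{r+1}{0}{0}{0} \right) = \qd{[n_0, n_1, n_2]}{\mathbb{P}_{2r}} \left( \PPS{1}{r}{r+1}{0}{1}{0} \right) \\
&\quad = \qd{[n_0, n_1, n_2]}{\mathbb{P}_{2r}} \left( \PPS{1}{r+1}{r}{0}{1}{0} \right),
\end{align*}
together with the corresponding chain of equalities for the exact integrals. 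Because $\PPS{1}{r+1}{r+1}{0}{0}{0}\in\mathbb{P}_{2r}$, the rule is exact on it, and the two chains of equalities then force exactness on both splines of the pair. Repeating this verbatim for the remaining two pairs settles all six extra splines and completes the proof.

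As this is a faithful transcription of the Clough--Tocher argument, I anticipate no real difficulty. The only point needing care is the bookkeeping: one must confirm that within each pair the two simplex splines are genuinely exchanged by a reflection of $T$ (so that the symmetry of the rule applies) and that the correct Bernstein polynomial heads the matching line of \eqref{eq:Bern-edge}. Once these checks are in place, the averaging argument goes through unchanged.
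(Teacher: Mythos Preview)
Your proposal is correct and follows essentially the same route as the paper's own proof: reduce to the basis of Proposition~\ref{prop:PS-basis}, use the symmetry of the rule together with the reflection exchanging the two members of each pair, and then apply the knot-insertion identities \eqref{eq:Bern-edge} to transfer exactness from the Bernstein polynomial to the two splines in the pair. The displayed chain of equalities you write is exactly the one appearing in the paper.
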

\begin{proof}
It suffices to show that the quadrature rule produces the exact value for the integral of the basis elements in \eqref{eq:PS-basis}. Since the quadrature rule is exact on ${\mathbb{P}_{2r}}$, we only consider the basis elements that are not polynomials on $T$.
Starting from the first equality in \eqref{eq:Bern-edge} and exploiting the symmetry of both the quadrature rule and the simplex splines, we deduce
$$
\begin{aligned}
\qd{[n_0, n_1, n_2]}{\mathbb{P}_{2r}} \left( \PPS{1}{r+1}{r+1}{0}{0}{0} \right)  &=  \qd{[n_0, n_1, n_2]}{\mathbb{P}_{2r}} \left( \PPS{1}{r}{r+1}{0}{1}{0} \right) \\
&= \qd{[n_0, n_1, n_2]}{\mathbb{P}_{2r}} \left( \PPS{1}{r+1}{r}{0}{1}{0} \right).
\end{aligned}
$$
In addition, again from the first equality in \eqref{eq:Bern-edge} and the symmetry of the simplex splines, we get
$$
\int_T \PPS{1}{r+1}{r+1}{0}{0}{0} = \int_T \PPS{1}{r}{r+1}{0}{1}{0} = \int_T \PPS{1}{r+1}{r}{0}{1}{0}.
$$
Since $\qd{[n_0, n_1, n_2]}{\mathbb{P}_{2r}}$ is exact for $\PPS{1}{r+1}{r+1}{0}{0}{0}$, it follows that it is also exact for $\PPS{1}{r}{r+1}{0}{1}{0}$ and $\PPS{1}{r+1}{r}{0}{1}{0}$.
By means of similar arguments applied to the second and third equality in \eqref{eq:Bern-edge}, we conclude the proof.
\end{proof}

\begin{example}
The Hammer--Stroud quadrature rule
\begin{equation}\label{eq:HS-quadratic}
\mathcal{Q}_{\mathbb{P}_2}^{\mathrm{HS}} (f) :=  \frac{\area (T)}{3} \sum_{ \bm{\alpha} \in \Pi \left\lbrace \frac{2}{3}, \frac{1}{6}, \frac{1}{6} \right\rbrace } f \left( \T \left(\bm{\alpha} \right)\right)
\end{equation}
is exact for quadratic polynomials \cite{HM56}. The rule $\mathcal{Q}_{\mathbb{P}_2}^{\mathrm{HS}}$ is a quadrature rule of type $\qd{[0, 1, 0]}{\mathbb{P}_{2}}$. From Theorem~\ref{thm:PS-quad} (with $r=1$) we know that it exactly integrates any element of the space $\SPS{1}{2}$. The same result has already been obtained in \cite{BK19} by using a different approach.
\end{example}

\begin{remark}
The smoothness $C^{2r-1}$ is the minimum for splines of degree $2r$ on $\DPS$ to ensure that any quadrature rule of type $\qd{[n_0, n_1, n_2]}{\mathbb{P}_{2r}}$ remains exact on the spline space. This can be seen by considering the simplest case $r=1$ and the element of $\SPS{0}{2}$ given by
$$
\PPS{2}{0}{1}{2}{0}{0} = 2 \left( \alpha_1-\alpha_2 \right) \left(2 \alpha_2\right) \PPS{1}{0}{1}{1}{0}{0},
$$
which is $C^0$ across the edge $\langle \p_3, \p_{1,2} \rangle $.
We take the Hammer--Stroud rule $\mathcal{Q}_{\mathbb{P}_2}^{\mathrm{HS}}$ defined in \eqref{eq:HS-quadratic} and a direct computation shows that
 $$\mathcal{Q}_{\mathbb{P}_2}^{\mathrm{HS}}\left ( \PPS{2}{0}{1}{2}{0}{0}\right )= \frac{2\area (T)}{9},\quad
 \text{ while} \quad
 \int_T \PPS{2}{0}{1}{2}{0}{0}=\frac{\area (T)}{6}.$$
Therefore, the rule $\mathcal{Q}_{\mathbb{P}_2}^{\mathrm{HS}}$ fails to achieve exactness on the space $\SPS{0}{2}$.
\end{remark}

\begin{remark}
Analogous to the even degree case, by \eqref{eq:PS-dim-full} we have
$\dim \, \SPS{2r}{2r+1} = \dim \, \mathbb{P}_{2r+1} + 3$.
However, Theorem~\ref{thm:PS-quad} does not possess a similar counterpart for odd degrees. Indeed, the Hammer--Stroud rule $\mathcal{Q}_{\mathbb{P}_3}^{\mathrm{HS}}$ defined in \eqref{eq:HS-cubic} is not able to exactly integrate the space $\SPS{2}{3}$ (and consequently, it is not able to exactly integrate the spaces $\SPS{\rho}{3}$ for $\rho \leq 2$). This can be seen by considering the following spline in $\SPS{2}{3}$:
$$
\PPS{4}{0}{1}{1}{0}{0} = \left( \alpha_1-\alpha_2\right)^3 \PPS{1}{0}{1}{1}{0}{0}.
$$
A direct computation shows that
$$\mathcal{Q}_{\mathbb{P}_3}^{\mathrm{HS}}\left (\PPS{4}{0}{1}{1}{0}{0} \right) = \frac{\area (T)}{15}, \quad
\text{ while}\quad
\int_T \PPS{4}{0}{1}{1}{0}{0} = \frac{\area (T)}{10}.$$
Therefore, the rule $\mathcal{Q}_{\mathbb{P}_3}^{\mathrm{HS}}$ fails to achieve exactness on the space $\SPS{2}{3}$.
\end{remark}

\section{Conclusion}\label{sec:conclusion}
Symmetric quadrature rules on triangles that are exact for polynomials of a specific degree $d$ can maintain their exactness when applied to larger spline spaces of the same degree $d$ in certain uniform scenarios. In this study, we have investigated such preservation of exactness for non-trivial spline spaces of the highest smoothness, having degree $d=3r$ on the Clough--Tocher 3-split or $d=2r$ on the (uniform) Powell--Sabin 6-split, for any positive integer~$r$.

Both the splits addressed in the current work have natural counterparts in $\mathbb{R}^m$ ($m\geq3$). The Alfeld split \cite{A84} extends the Clough--Tocher 3-split while the Worsey--Piper split \cite{WP88} can be seen as a generalization of the Powell--Sabin 6-split for $m=3$. Furthermore, the main tool used to prove the results of this paper, i.e., the simplex spline theory, is also perfectly suited for the $m$-variate setting.
In particular, for the Alfeld split in $\mathbb{R}^3$, the minimum degree required to achieve $C^1$ continuity is four, and the dimension of trivariate $C^1$ quartic splines on the Alfeld split is $38$, i.e., the dimension of trivariate quartic polynomials plus three \cite{KS13}. This suggests to replace one Bernstein polynomial by four simplex splines to get a basis of trivariate $C^1$ quartic splines on the Alfeld split, in a complete analogy with the bivariate case \cite{LMS24}. The line of arguments used in Section~\ref{sec:CT-split} can be extended to prove that a symmetric quadrature rule that exactly integrates trivariate quartic polynomials remains exact for trivariate $C^1$ quartic splines on the Alfeld split.
This indicates that the methodology employed in this work can be extended to the $m$-variate setting. This extension will be addressed in future work.

\bibliographystyle{amsplain}

\end{document}